\definecolor{codegreen}{rgb}{0,0.6,0}
\definecolor{codegray}{rgb}{0.5,0.5,0.5}
\definecolor{codepurple}{rgb}{0.58,0,0.82}
\definecolor{backcolour}{rgb}{0.95,0.95,0.92}
\lstdefinestyle{mystyle}{
    backgroundcolor=\color{backcolour},
    commentstyle=\color{codegreen},
    keywordstyle=\color{magenta},
    numberstyle=\tiny\color{codegray},
    stringstyle=\color{codepurple},
    basicstyle=\ttfamily\footnotesize,
    breakatwhitespace=false,
    breaklines=true,
    captionpos=b,
    keepspaces=true,
    numbers=left,
    numbersep=5pt,
    showspaces=false,
    showstringspaces=false,
    showtabs=false,
    tabsize=2
}
\newtheorem{theorem}{Theorem}
\newtheorem{Example}{Example}[section]
\newtheorem{definition}[theorem]{Definition}
\newtheorem{example}{Exemplo}[section]
\newtheorem{lemma}[theorem]{Lema}
\def\author@andify{%
  \nxandlist {\unskip ,\penalty-1 \space\ignorespaces}%
    {\unskip {} \@@and~}%
    {\unskip \penalty-2 \space \@@and~}%
}
\title[INTERNAL EXACT CONTROLLABILITY FOR NAGDHI SHELL]{INTERNAL EXACT CONTROLLABILITY FOR NAGDHI SHELL}
\author{Alexis Rodriguez Carranza}
\address{A. Rodriguez. National University of Trujillo}
\email{arodriguezca@unitru.edu.pe}
\author{Jose Luis Ponte Bejarano}
\address{J.Ponte. National University of Trujillo}
\email{jponteb@unitru.edu.pe}
\author{Juan Carlos Ponte Bejarano}
\address{C. Ponte. National University of Trujillo}
\email{jponte@unitru.edu.pe}
\keywords{Internal controllability, Naghdi shell}
\begin{document}

\begin{abstract}
In this work we study the exponential stability of the energy associated with a Naghdi shell model with localized internal dissipation. Using several tools from Riemannian Geometry we show the well possedness of the model, via semigroup theory, and obtain observability inequalities which allows to prove the exponential decay of the total energy. As a consequence then we use Russell Principle for obtain exact controllability.
\end{abstract}

\maketitle

\tableofcontents

\section{Introduction and summary}

Let us denote by $S$ a two dimensional smoooth Riemannian Manifold with the induced metric of $I\!\!R^3$ and inner product denoted by $g$ or simply $<.,.>$. Recall that this means that for each $p\in S$ we have an inner product $<.,.>$ on the tangent space $T_{p}S$ and this relation is $C^{\infty}$. We will consider $S$ as the middle surface of a thin shell.

Suppose we consider a bounded open region $M$ of $S$ with a smooth boundary $\Gamma=\partial M$. This paper is devoted to consider $M$ as a Naghdi type thin shell. In this dynamical model we study exponential stabilization of the total energy provided we assume an internal localized dissipation acting on $M$.

In the literature, mos of the authors prefer to use the classical geometrical approach while working on properties of the solutions of evolution thin shell. In those situations the middle surface is instead, the image under a smooth map of a two dimensional connected domain de $I\!\!R^2$ and therefore described under just a one coordinate path. Several interesting model for plates or shells with a variable coefficients and other hyperbolic type systems obtained by using traditional geomentry became very difficult to treat mainly due to the explicit presence of Chritoffel symbols $\Gamma^{k}_{ij}(p)$. Interesting aternative was given in the work due to peng fei Yao \cite{Yao} and collaborators about twenty years ago. In \cite{Yao}, used and intrinsic model of the middle surface of a shallow shell as a two dimensional Riemannianan manifold. This approach allows to get satisfactory results using multipliers. The basic idea was iniciated by S. Bochner in \cite{Bochner}. In order to very an identity or a pointwise estimate, it sufficies to do so at each point $p$ relative to a coordinate frame field wich give as more simplifications. The best coordinate system in our case would be the one in wich the symbols $\Gamma^{k}_{ij}(p)$ vanish at the given point $p$. As in \cite{Bochner} the best frame will be given by the so called coordinate system normal at $p$.

This paper is devoted to study exponential stabilization of the total energy associated with a dynamic thin shell equation of Naghdi type in the presence of localized internal dissipation.Using several Yao ideas we show the well posedness of the model, via semigroup theory, and obtain observability inequalities wich allo us to prove the exponential decay of the total energy. As a consequence then we use Russell Principle for obtain exact controllability.

\subsection{Preliminaries and stationary problem}
Let $S$ and $M$ be as in the introduction. In addition we will assume $S$ orientable and a normal field at each $x\in M$ will be denote by $N(x)$. The shell, a body of $I\!\!R^3$ is definied as
\[
S=\{p \in I\!\!R^3,\quad p=x+zN(x),\quad x\in M,\quad |x|<\frac{h}{2}\}
\]
where $h>0$ denotes the, small, thickness off the shell and $z\in I\!\!R$. In Naghdi model, the displacement vector $\xi(p)$ at a point $p\in S$ can be approximated by 
\begin{equation}\label{Eq21}
\xi(p)=\xi_1(p)+z\Psi(x),\quad x\in M,
\end{equation}
where $p=x+zN(x) \xi_1(x)\in I\!\!R^3$ denotes the displacement vector of the middle surface and $\Psi(x)$ captures the rotations of the normal $N(x)$ at each $x\in M$.
Following Naghdi description \cite{Naghdi} we assume that a normal after a deformation may not be again a normal, but, the distance from a point on the normal to the surface remains invariant. As a consequence, the deformations in the direction of the normal can be neglected. This implies in particular that $\xi_1(x)$ and $\Psi(x)$ in (\ref{Eq21}) can be decompose as
\begin{equation}\label{Eq22}
\xi_1(x)=W_1(x)+w_1(x)N(x)
\end{equation}
and 
\begin{equation}\label{Eq23}
\Psi_1(x)=V(x)+w_2(x)N(x)
\end{equation}
where $W_1$ and $V\in \chi(M)$, and $w_1$, $w_2\in C^{\infty}(M)$. Here $\chi(M)$ denotes the set of all vector fieds on $M$. We recall that a vector field is a map wich associates to each point $p\in M$ a vectr $X(p)\in T_p(M)$ wich is the tangential plane of $M$ at $p$.
\\
In order to find a model describing deformations of the middle surface we need to analyse the tensor field (of variation of the metric) using the second and thrid-fundamental forms on the surface $M$. In other words, we consider 
\begin{equation}\label{Eq24}
\Upsilon = \frac{1}{2}(\tilde{g}-g)
\end{equation}

where $g$ and $\tilde{g}$ denote the metric induced on the middle surface before and after the deformation, respectively. $\Upsilon$ is called the strain tensor of the middle surface. Given $x\in M$, we choose a coordinare system normal at $x$, say $\{ E_1(x), E_2(x), E_3(x)=N(x)\}$ wich is a basis of $I\!\!R^3$. Now, we calculate $\tilde{g}(E_i, E_j)-g(E_i, E_j)$ to find, after linearization
\begin{equation}{\label{Eq25}}
\Upsilon(\xi)=\frac{1}{2}(DW_1+D^{*}W_1)+w_1\Pi
\end{equation}
where $\xi=(W_1,V,w_1,w_2)$, $DW_1$ is the covariant differential of $W_1$ of $W_1$ and $D^{*}W_1$ is the transpose of $DW_1$ and $\Pi$ is the second fundamental form of $M$ wich is a $2-$ covariant tensor. $\Upsilon(\xi)$ is called the tensor change of metric(linearized). In a similar way we can deduce the change of curvature tensor(linearized)

\begin{equation}{\label{Eq26}}
\chi_0(\xi)=\frac{1}{2}\{DV+D^{*}V+\Pi(.,DW_1)+\Pi(DW_1,.)\}+w_2\Pi+w_1c
\end{equation}
where $c$ is the third fundamental form on the surface $M$. Also, the tensor wich captures rotations of the normal is given by:

\begin{equation}{\label{Eq27}}
\varphi_0(\xi)=\frac{1}{2}[Dw_1+V-i(W_1)\Pi]
\end{equation}
where $i(W_1)\Pi$ is the interior product of the tensro field $\Pi$ by the vector field $W_1$. It is convenient to write (\ref{Eq25}), (\ref{Eq26}), (\ref{Eq27}) in a more concise way. For example, consider the change of variable,

\begin{equation}{\label{Eq28}}
W_2=V+i(W_1)\Pi
\end{equation}
for $x\in M$. As above we consider the system normal at $x$, $\{E_1(x), E_2(x), E_3(x)=N(x)\}$. Direct calculations give us,

\begin{equation*}
DW_2(E_i,E_j)=DV(E_i,E_j)+D\Pi(W_1,E_i,E_j)+\Pi(E_i,D_{E_j}W_1)
\end{equation*}
because $D_{E_j}E_i(x)=0.$ Thus $DW_2=DV+\Pi(.,DW_1)+i(W_1)D\Pi$ for $x\in M.$ Substitution into (\ref{Eq26}) gives

\begin{equation}{\label{Eq29}}
\chi_0(\xi)=\frac{1}{2}(DW_2+D^{*}W_2)+K_{ol}(\xi)
\end{equation}

substitution of (\ref{Eq28}) into (\ref{Eq27}) give us

\begin{equation}{\label{Eq49}}
\varphi_0(\xi)=\frac{1}{2}Dw_1+\varphi_{ol}(\xi)
\end{equation}
where $K_{ol}(\xi)=-i(W_1)D\Pi+w_1c+w_2\Pi$ and $\varphi_{ol}(\xi)=-i(W_1)\Pi+\frac{W_2}{2}$
Assuming the material of the shell is homogeneous and isotropic we find in the literature (see for instance \cite{Ciarlet}) the stress-strain relations of the $3-$ dimensional shell on the middle surface $M$ and express energy as an integral over $M\times [\frac{-h}{2},\frac{h}{2}]$. Let $R>0$ be the smallest principal radius of curvature of the undeformed middle surface (see \cite{Ciarlet}, pg 166). As usual, for a thin shell it is assumed that $\frac{h}{R}<<1$ (see \cite{Ciarlet}, pg 18). Using this assumption, the following approximation of the strain energy of the shell is obtain (see \cite{Ciarlet}, pg 253)
\begin{equation}{\label{Eq30}}
\begin{aligned}
I(\xi)=&\alpha h \int_{M}\left\{\left.|\Upsilon(\xi)\right|^{2}+2\left|\varphi_{0}(\xi)\right|^{2}+\omega_{2}^{2}+\beta\left(tr(\Upsilon(\xi))+w_{2}\right)^{2}\right. \\
&\left.+\gamma\left[|\chi_0(\xi)|^{2}+\frac{\left|D w_{2}\right|^{2}}{2}+\beta\left(tr(\chi_0(\xi))\right)^{2}\right]\right\} d M
\end{aligned}
\end{equation}
where $\alpha=\frac{E}{1+\mu}$, $\beta=\frac{\mu}{1-2\mu}$ and $\gamma=\frac{h^2}{12}$. Here, $E$ denotes Young modules and $\mu$ is Poisson ratio $(0<\mu<\frac{1}{2})$.

The above expression (\ref{Eq30}) of $I(\xi)$ allows us to consider the following symetric, bilinear form $\tilde{B_0} $ associated to the strain energy, definied on the space $Z=[H^1(M,\wedge)]^2\times [H^1(M)]^2:$
\begin{equation}{\label{Eq31}}
\tilde{B_0}(\xi,\theta)=\frac{\alpha h}{2}\int_M B_0(\xi,\theta)dM
\end{equation}
where $\xi=(W_1,W_2,w_1,w_2)\in Z$, $\theta=(\theta_1,\theta_2,u_1,u_2)\in Z$ and 
\begin{equation}{\label{Eq32}}
\begin{aligned}
&B_{0}(\xi, \theta)=2\langle \Upsilon(\xi),\Upsilon(\theta)\rangle \\
&\left.+4<\varphi_{0}(\xi), \varphi_{0}(\theta)\right\rangle+2 \omega_{2} u_{2}+ \\
&+2 \beta \cdot\left(\operatorname{tr}( \Upsilon(\xi))+\omega_{2}\right)\left(t_{2} \Upsilon(\theta)+u_{2}\right) \\
&\left.\left.+2 \gamma<\chi_0(\xi), \chi_0(\theta)\right)\right\rangle+\gamma\left\langle D \omega_{2}, D u_{2}\right\rangle \\
&+2 \gamma \beta \operatorname{tr}(\chi_0(\xi)).\operatorname{tr}(\chi_0(\theta))
\end{aligned}
\end{equation}

In order to obtain Green identify we consider the Hodge-Laplace type operator $\Delta_{\beta}$ given by:

\begin{equation}{\label{Eq33}}
\Delta_{\beta}=-[\delta d+2(1+\beta) d \delta]
\end{equation}
where $\beta=\frac{\mu}{1-2\mu}$, $d$ is the exterior derivative and $\delta$ is its formal adjoint. The operator $\Delta_{\beta}$ acts taking a $p$-form to another $p$-form. In our case, we will need only $\Delta_{\beta}$ operating on $1$-forms.
In \cite{Yao}(theorem 5.1) the following result was prove:
Let us consider the bilinear form $\tilde{B_0}(.,.)$ given by(\ref{Eq31}). Then, for any $\xi=(W_1,W_2,w_1,w_2)\in Z$ and $\theta=(\theta_1,\theta_2,u_1,u_2)\in Z$, the identity:
\begin{equation}{\label{Eq34}}
\widetilde{B}_{0}(\xi, \theta)=\frac{\alpha h}{2}\left\langle A_{0} \xi, \theta\right\rangle L^{2}+\frac{\alpha h}{2} \int_{\Gamma=\partial M} \partial(A_0\xi,\theta) d \Gamma
\end{equation}
holds, where $L^2=[L^2(M,\wedge)]^2\times [L^2(M)]^2$ and
\begin{equation}\label{Eq35}
\begin{aligned}
A_{0}(\xi)=-&\left(\Delta \omega_{1}+F_{1}(\xi), \gamma \Delta_{\beta} W_{2}+F_{2}(\xi),\right.\\
&\left.\Delta w_{1}+f_{1}(\xi), \gamma \Delta \omega_{2}+f_{2}(\xi)\right) \\
\partial\left(A_{0} \xi, \theta\right) &=\left\langle c_{1}(\xi), \theta_{1}\right\rangle+\gamma\left\langle\operatorname{c}_{2}(\xi), \theta_{2}\right\rangle \\
&+2\left\langle\varphi_{0}(\xi) ; \eta\right\rangle u_1+\gamma \frac{\partial \omega_{2}}{\partial \eta} u_{2}
\end{aligned}
\end{equation}
with 
\begin{equation}{\label{Eq36}}
\begin{aligned}
&c_{1}(\xi)=2 i(\eta) \Upsilon(\xi)+2 \beta\left(\operatorname{tr} \Upsilon(\xi)+w_{2}\right) \eta \\
&c_{2}(\xi)=2 \gamma i(\eta) \chi_{0}(\xi)+2 \beta(\operatorname{tr}(\chi_0(\xi)) \eta
\end{aligned}
\end{equation}
here $\eta$ denotes the exterior normal vector along the curve $\Gamma=\partial M,$ $\Delta$ is the usual Laplace-Beltrami operator on the Riemannian manifold $M$ and $F_j(\xi)$ and $f_j(\xi)$ are first order terms, i.e, of order $\leq 1$ for $j=1\,\,\mbox{or}\,\, 2$ 

The above description was shown in \cite{Yao}. In fact, the variable $\xi=(W_1,W_2,w_1,w_2)$ satisfies the following system
$$
\begin{cases}W_{1}^{\prime \prime}-\Delta W_{\beta}+F_{1}(\xi)=0 & \text { on } M \times(0, \infty) \\ W_{2}^{\prime \prime}-\Delta W_{2}+F_{2}(\xi)=0 & \text { on } M \times(0,+\infty) \\ w_{1}^{\prime \prime}-\Delta w_{1}+f_{1}(\xi)=0 & \text { on } M \times(0,+\infty) \\ w_{2}^{\prime \prime}-\Delta w_{2}+f_{2}(\xi)=0 & \text { on } M \times(0,+\infty)\end{cases}
$$
with

$$
\left\{\begin{array}{l}
\xi=0 \quad \text { on } \Gamma(M) \times(0,+\infty) \\
\xi(0)=\xi_{0}, \xi_{t}(0)=\xi \quad \text{on}\quad \mbox{M}
\end{array}\right. 
$$ 

Here $\Gamma(M)$ denotes the boundary of $M$. Let us consider the following spaces
$$
\begin{aligned}
&H_{\Gamma}^{1}(M)=\left\{u \in H^{1}(M), u \equiv 0 \text { on } \Gamma=\Gamma(M)\right\} \\
&H_{\Gamma}^{1}(M, \Lambda)=\left\{z \in H^{1}(M, \Lambda), z \equiv 0 \text { on } \Gamma=\Gamma(M)\right\}
\end{aligned}
$$
and 
$$
X_{\Gamma}=\left[H_{\Gamma}^{1}(M, N)\right]^{2} \times\left[H_{\Gamma}^{\prime}(M)\right]^{2}
$$
Next, we can prove that the symetric bilinear form $\tilde{B}(\xi, \theta)$ defined in (\ref{Eq34}) for any $\bar{\xi}, \theta \in Z$ is coercive, that is, there exits a positive constant $c_3$ such that 
\begin{equation}{\label{Eq37}}
\widetilde{B}(\xi, \xi) \geqslant c_{3}\|\xi\|_{X_{\Gamma}}^{2} \text { for any } \xi \epsilon X_{\Gamma}
\end{equation}
In fact, substitution of (\ref{Eq25}), (\ref{Eq29}) and (\ref{Eq49}) into (\ref{Eq32}) followed by integration over $M$ give us 
\begin{equation}{\label{Eq38}}
\tilde{B}_{0}(\xi, \xi)+K_{2}\|\xi\|_{L^{2}}^{2} \geqslant K_{1}\|\xi\|_{H_{\Gamma}^{1}}^{2}(M)
\end{equation}
for some positive constants $K_1$ and $K_2$.

In order to use (\ref{Eq38}) to obtain (\ref{Eq37}) we can use the following uniqueness results: Let $\xi=\left(W_{1}, W_{2}, w_{1}, w_{2}\right)$ belonging to $X_{\Gamma}$ such that 
$\Upsilon(\xi)=0, \chi_{0}(\xi)=0, \varphi_{0}(\xi)=0$ and $w_2=0$. Then $\xi =0$ for all $x\in M$. Using this uniqueness result together with the method called compactness-uniqueness we can ``adsorb" the term $K_2\|\xi\|_{L^{2}}^{2}$ into the term of the right hand side of (\ref{Eq38}) to conclude the validity of (\ref{Eq37}). The expression (\ref{Eq34}) for the bilinear form $\widetilde{B}_{0}(\xi, \theta)$ is known. Using the above discusiion we deduce that the variational problem associated to the bilinear form $\widetilde{B}_{0}$ it is equivalent to the following boundary value problem: To find $\xi=\left(\boldsymbol{W}_{1}, \mathbf{W}_{2}, \mathbf{w}_{1}, \mathbf{w}_{2}\right)$ such that 
\begin{equation}{\label{Eq39}}
\left\{\begin{array}{l}
\frac{\alpha h}{2} A_{0} \xi=\tilde{F} \\
\left.W_{1}\right|_{\Gamma}=\left.W_{2}\right|_{\Gamma}=0,\left.w_{1}\right|_{\Gamma}=\left.w_{2}\right|_{\Gamma}=0
\end{array}\right.
\end{equation}
for a given $\tilde{F} \in \mathbf{L}^{2}=\left[L^{2}(M, \Lambda)\right]^{2} \times\left[L^{2}(M)\right]^{2}$

\subsection{Equation of motion}
In this Section we will consider the equations of motion of Naghdi model. We assume that there are no external loads on the shell and the shell is clamped along $\Gamma=\Gamma(M)$. In this situation $\tilde{\xi}=\tilde{\xi}(x,t),$  $x \in M$ and $t$ is time. In onder to include the kinetic energy to our problem it is convevient to consider the change of variables $t \rightarrow t c_{1}^{-1}$ where $c_{1}^{2}=\frac{2}{\alpha}$ with $\alpha$ as in (\ref{Eq30}). Also, denoting by
$$
R=\left[\begin{array}{llll}
1 & 0 & 0 & 0 \\
0 & \gamma & 0 & 0 \\
0 & 0 & 1 & 0 \\
0 & 0 & 0 & \gamma
\end{array}\right]
$$
and $\xi=R^{1 / 2} \cdot \widetilde{\xi}$ we write the equations of evolution of a Naghdi shell for $\xi=\left(\boldsymbol{W}_{1}, \mathbf{W}_{2} \mathbf{w}_{1}, \mathbf{w}_{2}\right)$ as 
\begin{equation}{\label{40}}
\left\{\begin{array}{l}
\xi+A \xi=0 \text { on } M \times \mathbb{R}^{+} \\
\xi=0 \text { on } \Gamma(M) \times \mathbb{R}^{+} \\
\xi(x, 0)=\xi_{0}(x), \xi_{t}(x, 0)=\xi_{1}(x) \text { on } M
\end{array}\right.
\end{equation}
where $	\mathbf{A}= R^{-1 / 2} \mathbf{A}_{0} R^{-1 / 2}$. The bilinear form $J$ associated to operator $\mathbf{A}$ is given by 
\begin{equation}{\label{42}}
\begin{aligned}
\mathbf{J}(\xi, u)&= 2\langle\Upsilon(\xi), \Upsilon(u)\rangle+4\left\langle\boldsymbol{\varphi}_{0}(\xi), \boldsymbol{\varphi}_{0}(u)\right\rangle \gamma \\
&+2 \beta\left[Tr (\Upsilon(\xi))+\frac{w_{2}}{\sqrt{\gamma}}\right]\left[Tr( \Upsilon(u))+\frac{u_{2}}{\sqrt{\gamma}}\right] \\
&+2 \beta Tr( \chi_0(\xi)) Tr( \chi_0(u))+2\left\langle\chi_0(\xi), \chi_0(u)\right\rangle \\
&+\left(D w_{2}, D u_{2}\right\rangle+\frac{2}{\gamma} w_{2} u_{2}
\end{aligned}
\end{equation}
for all $\xi=\left(W_{1}, W_{2}, w_{1}, w_{2}\right)$ and $u=\left(U_{1}, U_{2}, u_{1}, u_{2}\right)$  with  $\xi \text { and } u \in Z$. Also $\Upsilon, \chi_0$ and $\varphi_0$ are as in (\ref{Eq25}), (\ref{Eq26}) and (\ref{Eq27}) respectively. The bilinear form associated with the operator $A \text { is } \mathbf{J}(\xi, u)$ and the coresponding Green formula would be 
\begin{equation}{\label{43}}
\widetilde{J}(\xi, u)=\langle\mathbf{A} \xi, u\rangle_{Y}+\int_{\Gamma=\partial M} \partial\left(\mathbf{A}\xi, u\right) d r
\end{equation}
where $\tilde{J}(\xi, u)=\int_{M} \mathbf{J}(\xi, u) d M$. Now, we consider a localized perturbation model (\ref{40}): To find $\xi=\left(W_{1}, W_{2}, w_{1}, w_{2}\right) \in Z$ satisfying 
\begin{equation}{\label{44}}
\left\{\begin{array}{l}
\xi_{t t}+A \xi+a(x) \,\, \xi_{t}=0 \quad \text { on } M \times (0,+\infty) \\
\xi=0 \text { on } \Gamma(M) \times(0,+\infty) \\
\xi(x, 0)=\xi_{0}(x), \xi_{t}(x, 0)=\xi_{1}(x) \text { on } M
\end{array}\right.
\end{equation}
where $a(x)$ is a real valued function defined for all $x\in M$ such that has support in a small interior region of $M$. Well posedness of problem (\ref{44}) it follows using standar tools, for example, the semigroup theory \cite{Pazy} and omitt the proof here. Before we present a proof of the uniform stabilization of the total energy of model (\ref{44}) we need some preliminaries

Let us denote by $T^{2}(M)$ the set of all tensor fields on $M$ of rank $2$. We define the bilinear form $b(\cdot,\cdot):\quad T^{2}(M) \times T^{2}(M) \mapsto \mathbb{R}$ gives by
\begin{equation}{\label{45}}
b\left(T_{1}, T_{2}\right)=\left\langle T_{1}, T_{2}\right\rangle+\beta(trace(T_1))(trace(T_2))
\end{equation}
for any $T_1,T_2\in T^{2}(M)$. Here, for any $T\in T^{2}(M)$, the trace of $T$ at $x\in M$ is given by $trace(T)= \sum_{i=1}^{2}T(e_i,e_i)$ where $\{e_1,e_2\}$ is an ortonormal basis of $T_x M$. For any $W \in H^{1}(M, \Lambda	)$ we define 
\begin{equation}{\label{46}}
\widetilde{S}(W)=\frac{1}{2}\left(D W+D^{*} W\right)
\end{equation}
It is known that there exist a positive constant $\lambda$ such that
\begin{equation}{\label{47}}
2\|\tilde{S}(W)\|_{L^{2}\left(M, T^{2}\right)} =\left\|D W+D^{*} W\right\|_{{L^{2}\left(M, T^{2}\right)}}{\geq \lambda\|W\|_{H^1(M, \Lambda)}}
\end{equation}
for any $w\in H^1(M, \Lambda)$. See lemma $(4.5)$ in \cite{Yao}. We claim that there exist $\lambda_0 >0$ such that 
\begin{equation}{\label{48}}
\lambda_0 \int_M\left[b(\widetilde{S}(w), \tilde{S}(w))+|W|^2\right] d M \geqslant\|D w\|_{_{L^{2}\left(M,\Lambda\right)}}^{2}
\end{equation}
holds for any $W \in H_{\Gamma}^1(M,\Lambda)$. In fact,
$$
b (\tilde{S}(w), \tilde{S}(w))=\frac{1}{4}\left|D w+D^{*} w\right|^2+\beta\left(Trace \tilde{S}(w)\right)^2
$$
consequently,
\begin{equation}{\label{49}}
b(\widetilde{S}(w), \widetilde{S}(w))+|w|^2 \geq \frac{1}{4}\left|D W+D^* W\right|^2
\end{equation}
integration of (\ref{49}) over $M$ and using (\ref{47}) we obtain,
$$
\begin{aligned}
\int_m\left[b(\tilde{S}(w), \widetilde{S}(W))+\mid W i^2\right] d M & \geqslant \frac{1}{4}\left\|D W+D^* w\right\|_{L^2(M,T^2)}^{2} \\
& \geqslant \frac{\lambda}{4}\|w\|_{H^1(M, \Lambda)}^{2} \\
& \geqslant \frac{\lambda}{4}\|D W\|_{L^2(M, \Lambda)}^{2}
\end{aligned}
$$
wich prove our claim.
Next, was will use the tedmiage of multiplies to stain appropiatte identitis and inequalities. wet us assume that given $V \in \chi(M)$ there exists a function $v(x) \in C^{\infty}(M)$ such that
\begin{equation}{\label{50}}
D V(x)(X, X)=v(x)|X|^2
\end{equation}
for all $X \in T_x(M), x \in M$. Given $\xi=\left(W_1, W_2, w_1, w_2\right) \in Z$ we consider 
\begin{equation}{\label{51}}
m(\xi)=\left(D_V W_1, D_V W_2, V\left(w_1\right), V\left(w_2\right)\right)
\end{equation}
we recall that $D V(X, X)=\operatorname{DV}_X(X)=\left\langle\operatorname{DV}_X, X\right\rangle$, for all $X \in T_x(M), x \in M$.

Using our assumption (\ref{50}), we take the inner product of equation (\ref{44}) with $m(\xi)$ and integrate over $M$
\begin{equation}{\label{52}}
\begin{aligned}
&\left\langle\xi_{tt}, m(\xi)\right\rangle_{L^2(M,\Lambda)}+\langle A \xi, m(\xi)\rangle+\langle a(x) \xi_t, m(\xi)\rangle=0
\end{aligned}
\end{equation}
we can use identity (\ref{43}) to deduce from (\ref{52})
\begin{equation}{\label{53}}
\begin{aligned}
&\left\langle\xi_{tt}, m(\xi)\right\rangle_{L^2(M, \Lambda)}+ \tilde{J}(\xi, m(\xi))-\int_{\Gamma=\Gamma(M)} \partial(A \xi, m(\xi)) d \Gamma=-\left\langle a(x) \xi_t, m(\xi)\right\rangle.
\end{aligned}
\end{equation}
Using similar calculations as the ones given in Lemma $5.2$ in \cite{Yao} we deduce the identity
\begin{equation}{\label{54}}
\begin{aligned}
2 \widetilde{J}(\xi, m(\xi))=& \int_{\Gamma} J(\xi, \xi)\langle V, \eta\rangle d \Gamma 
-2 \int_M v J(\xi, \xi) d M+2 \int_M K(\xi, \xi) d M \\
&+l_0(\xi)
\end{aligned}
\end{equation}
where $\eta$ is the outside normal vector field along $\Gamma=\Gamma(M)$,
\begin{equation}{\label{55}}
\begin{aligned}
K(\xi, \xi) &=2 b(\tilde{S} ( W_1), G\left(V, D W_1\right))+2 b (\tilde{S}\left(W_2\right), G\left(V, D W_2\right))+\\
&+4 v\left|\Psi_0(\xi)\right|^2+v\left|D W_2\right|^2.
\end{aligned}
\end{equation}
Here $b(\cdot,\cdot)$ is as in (\ref{45}) and $G$ is a map defined by,
\begin{equation}{\label{56}}
\begin{aligned}
&G: \chi(M) \times T^2(M) \longmapsto T^2(M) \\
&G(W, T)=\frac{1}{2}\left[T(\cdot, \nabla \cdot W)+T^*(\cdot, W \cdot W)\right]
\end{aligned}
\end{equation}
Finally, in (\ref{54}), $l_0(\xi)$ denotes lower order terms with respect to the energy in the sense that for any  $\epsilon>0$ there exists $C_{\epsilon}>0$ such that 
\begin{equation}{\label{57}}
\left|l_0(\xi)\right|^2 \leqslant \varepsilon d(\xi)+C_{\varepsilon} h(\xi) \text { for all } x \in M
\end{equation}
where $d(\xi)$ is the density energy $e(t)=\frac{1}{2} \int_M d(\xi) d M$ y $h(\xi)$ has partial derivatives only up to order $1$.

\section{The stabilization result}
Let $\xi$ be the displacement field of Naghdi shell. The total energy 	of the model is 
\begin{equation}{\label{58}}
E(t)=\frac{1}{2}\left\|\xi_t\right\|_{L^2(M)}^2+\frac{1}{2} \tilde{J}(\xi, \xi) 
\end{equation}
where $\tilde{J}$ is given as in(\ref{43}). In order to study the stabilization result for the solutions of model (\ref{44}) we will consider the following assumption on $a(x)$

{\bf Assumption 1} a:M $\longmapsto \mathbb{R}^{+}$is a real valued function nonnegative and with support in a small interior region of $M$.
\begin{theorem}{\label{Th10}}
Consider the solution of problem (\ref{44}), $\text { (with } \Gamma_0=\Gamma \text { ) }$ and initial data $\xi_0 \in Z=\left[\mathbf{H}^{1}(M, 1)\right]^2 \times\left[\mathbf{H}^{1}(M)\right]^2$, $\left.\xi_1 \in Y=\left[L^2(M, \Lambda)\right]^2 \times\left[L^2(M)\right]\right]^2$. Assume that condition (\ref{50}) holds. Then, the identity:
\begin{equation}{\label{80}}
\begin{aligned}
& \int_0^T \int_{\Gamma}\left[2 \partial(A \xi, m(\xi))+\left(\left|\xi_t\right|^2-J(\xi, \xi)\right)\langle V, \eta\rangle\right] d r d t \\
& \left.\quad=2\left(\xi_t, m(\xi)\right)_{L^2}\right]_0^T+2 \int_0^T \int_M v\left[|\xi_t|^2-J(\xi \xi)\right] d M d t \\
& +2 \int_0^T \int_M^T k(\xi, \xi) d M d t-\int_0^T\left(a(x) \xi_t, 2 m(\xi)\right) d t \\
& \quad+l_0(\xi)
\end{aligned}
\end{equation}
holds.
\end{theorem}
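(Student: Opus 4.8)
The plan is to start from the pointwise-in-time identity \eqref{53}, which already records that $\xi$ solves \eqref{44} tested against the multiplier $m(\xi)$ together with the Green formula \eqref{43}, to multiply it by $2$ and integrate in time over $[0,T]$, and then to rearrange the three resulting terms into \eqref{80}.

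Consider first the inertial term. Since the field $V$ and the Levi--Civita connection of $M$ are independent of $t$, time differentiation commutes with the operations in \eqref{51}, so $\partial_t m(\xi)=m(\xi_t)$. Integration by parts in $t$ gives
\[
2\int_0^T\langle\xi_{tt},m(\xi)\rangle_{L^2}\,dt
  =2\big[(\xi_t,m(\xi))_{L^2}\big]_0^T-2\int_0^T\langle\xi_t,m(\xi_t)\rangle_{L^2}\,dt .
\]
For the last integrand, metric compatibility of $D$ (that is, $V\langle W,W\rangle=2\langle D_VW,W\rangle$ for a vector field and $V(w^2)=2\,wV(w)$ for a function) yields at each $x\in M$ the pointwise identity $\langle\xi_t,m(\xi_t)\rangle=\tfrac12\,V(|\xi_t|^2)$, where $|\xi_t|^2$ is the sum of the squared norms of the four components of $\xi_t$. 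Integrating over $M$, applying the divergence theorem and using that hypothesis \eqref{50} forces $\operatorname{div}V=\operatorname{tr}DV=2v$, we obtain
\[
2\int_0^T\langle\xi_t,m(\xi_t)\rangle_{L^2}\,dt
  =\int_0^T\!\!\int_\Gamma|\xi_t|^2\langle V,\eta\rangle\,d\Gamma\,dt
  -2\int_0^T\!\!\int_M v\,|\xi_t|^2\,dM\,dt .
\]
For the elastic term, insert identity \eqref{54} for $2\widetilde J(\xi,m(\xi))$; this supplies exactly the boundary integral $\int_\Gamma J(\xi,\xi)\langle V,\eta\rangle\,d\Gamma$, the interior terms $-2\int_M vJ(\xi,\xi)\,dM$ and $2\int_M K(\xi,\xi)\,dM$, and the lower-order remainder $l_0(\xi)$.

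Substituting these displays into $2\times$\eqref{53} integrated over $[0,T]$, and then transposing the boundary integrals of $|\xi_t|^2\langle V,\eta\rangle$, $J(\xi,\xi)\langle V,\eta\rangle$ and $\partial(A\xi,m(\xi))$ to the left-hand side while keeping the endpoint and interior terms on the right, one arrives precisely at \eqref{80}; the damping term $\int_0^T(a(x)\xi_t,2m(\xi))\,dt$ is inherited directly from the right-hand side of \eqref{53}. The substantive computation — expressing $2\widetilde J(\xi,m(\xi))$ through its boundary flux, the curvature-type term $K(\xi,\xi)$ and lower-order terms — is exactly \eqref{54}, which is granted (it follows the lines of Lemma~5.2 in \cite{Yao}); within the present proof the only delicate point is the bookkeeping: verifying $\partial_t m(\xi)=m(\xi_t)$ and the pointwise identity $\langle\xi_t,m(\xi_t)\rangle=\tfrac12V(|\xi_t|^2)$, using $\operatorname{div}V=2v$ correctly in the divergence theorem, and tracking signs when the boundary integrals are collected — in particular the $|\xi_t|^2$ boundary term enters with the sign opposite to the one produced by \eqref{54}. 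No regularity issue beyond that already needed to justify \eqref{53} and \eqref{54} arises.
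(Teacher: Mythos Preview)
Your proof is correct and follows essentially the same route as the paper: both start from the equation tested against $2m(\xi)$ (identity \eqref{53}/\eqref{81}), integrate by parts in time and apply the divergence theorem with $\operatorname{div}V=2v$ to handle the kinetic term (this is the paper's \eqref{82}--\eqref{83}), invoke \eqref{54} for the elastic term (the paper's \eqref{84}), and collect the boundary and interior pieces. Your write-up is in fact more careful about the bookkeeping (the commutation $\partial_t m(\xi)=m(\xi_t)$, the pointwise identity $\langle\xi_t,m(\xi_t)\rangle=\tfrac12 V(|\xi_t|^2)$, and the sign of the $|\xi_t|^2$ boundary flux) than the paper's own version.
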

holds. Here $m(\xi)$ is a in (\ref{51}) and $K$ given in (\ref{55}). Furthermore, if we consider $P: M \longrightarrow \mathbb{R}$ any smooth function, then, the identity
\begin{equation}{\label{801}}
\begin{aligned}
\int_0^T \int_{\Gamma} \partial(A \xi, p \xi) d \Gamma d t & =\int_0^T \int_M p\left[J(\xi, \xi)-|\xi|^2\right] dM d t \\
& -\int_0^T\left(a(x) \xi_t, p \xi\right)_{L^2} d t+l_0(\xi) \\
\end{aligned}
\end{equation}
holds.

{\bf Proof:}
As before we can use (\ref{52}) and (\ref{43}) to obtain from equation (\ref{44}) the identity
\begin{equation}{\label{81}}
\begin{aligned}
& \left\langle\xi_{tt}, 2 m(\xi)\right\rangle_{L^2(M, \lambda)}+\tilde{J}(\xi, 2 m(\xi)\rangle \\
& -\iint_{\Gamma} \partial(A \xi, 2 m(\xi)) d \Gamma=-\left\langle a(x) |\xi_t|, 2m(\xi)\right\rangle. \\
&
\end{aligned}
\end{equation}
Using (\ref{50}), (\ref{51}) we obtain after taking inner product, integrate over $M$ and use Green formula
\begin{equation}{\label{82}}
-2\left\langle\xi_t, m\left(\xi_t\right)\right\rangle_{L^2}=2 \int_M v\left|\xi_t\right|^2 d M-\int_{\Gamma=\partial M}\left|\xi_t\right|^2\langle V, \eta\rangle d \Gamma
\end{equation}
Substitution of (\ref{82}) into (\ref{81}) gives us
\begin{equation}{\label{83}}
\begin{aligned}
\left\langle\xi_{t t}, 2 m(\xi)\right\rangle_{L^2} & =2 \frac{\partial}{\partial t}\left\langle\xi_t, m(\xi)\right\rangle_{L^2}+2 \int_M v\left|\xi_t\right|^2 d M \\
& -\int_{\Gamma=\partial M}\left|\xi_t\right|^2\langle V, \eta\rangle d \Gamma
\end{aligned}
\end{equation} 
Using Green formula for operator $A$ and relation (\ref{54}) we deduce
\begin{equation}{\label{84}}
\begin{aligned}
\langle A \xi, 2 m(\xi)\rangle_{L^2} & =\int_{\Gamma=\partial M}[\boldsymbol{J}(\xi, \xi)\langle V, \eta\rangle -2 \partial(\boldsymbol{A} \xi, m(\xi))] d \Gamma \\
&-2 \int_M v \boldsymbol{J}(\xi, \xi) d M+2\int_{M}K(\xi, \xi) d M +l_0(\xi)
\end{aligned}
\end{equation}
Using identities (\ref{83}), (\ref{84}) follow (\ref{801}). We need some geometric hypothesis in orde to obtain the desired stabilization result in case of localized internal dissipation.
\begin{definition}{\label{Camp_Esca_NagEv}}
Let $V$ a vector field on $M$ that is $V \in X(M)$. We say that $V$ is an escape vector field for the Naghdi shell if the following conditions are satisfy:
\begin{itemize}
	\item[a)] There exists a function $v$ on $M$ such that 
	$$
	D V(X, X)=v(x)|X|^2
	$$
	for all $X \in T_x(M), x \in M$
	\item[b)] Let $\varepsilon(x)$ denote the ``volume element of the middle surface M", consider 
	$$f(x)=\frac{\langle D V, \varepsilon\rangle}{2} \quad \text{for}\,\,\, x \in M$$
    The functions $v(x)$ and $l(x)$ are assume to satisfy the inequality 
    $$2 \min_{x \in M} r(x)>\lambda_0(1+2 \beta) \max_{x \in M}|\ell(x)|$$
    where  $\lambda_0 \geqslant 1$ satisfies (\ref{48}) with $\lambda_0^{-1}=\frac{c}{4}$ and $\beta=\frac{\mu}{1-2 \mu}$
\end{itemize}
\end{definition}
\section{Some comments on scape vector fields:}
\begin{itemize}
	\item[1.] It is well known that on a $2$-dimensional middle surface $M$ there always exists a vector freed $V$ satisfying assumption a). To obtain assumption b) may be the difficult part. It is known that a necessary condition to hold condition b) is that there is no closed geodesics inside the middle surface $M$.
	\item[2.] Condition b) says in a sense that function $l(x)$ is related to the symmetry of the covariant differential DV. In fact, if $D V$ is symmetric then $l(x)=0$ for all $x \in M$.
	\item[3.] In our case $M$ is an oriented Riemsannian manifold of $\operatorname{din} M=2$. Let $\left\{e_1, e_2\right\} \in T_x(M)$ be linearly independent and let $\varepsilon$ be a differential form of degree 2 defined as
\begin{align*}
\varepsilon\left(e_1, e_2\right)(x) &= \pm \sqrt{\operatorname{det}\left(\left\langle e_i e_j\right\rangle\right)}\\
  &= \text{orient.vol}\{e_1, e_2\}
\end{align*}
$x\in M$. The oriented volume is affected by the sign $+$ or $-$ depending on wheter or not a basis $\{ e_1, e_2\}$ belongs to the orientation on $M$. $\varepsilon=\varepsilon(x)$ is called the volume element of $M$. Some texts define the volume element as a $2-$ form on $M$ if for any frame field $\{e_1, e_2\}$, $|\varepsilon(e_1,e_2)|=1$. In chapter $4$ of \cite{Yao} several examples are presented where we can assure the construction of escape vector fields for a shallow shells.
\end{itemize}
Next, we define the motion of scape region a piece $\overline{M}$ of the middle surface $M$ wich will be convenient in order to obtain the desired stabilization result. We are interested in case where $\overline{M}$ is not the whole $M$ and maybe small as possible (in some sense). The notion was used by several authors in related work (see \cite{Yao}, \cite{Petersen}, and references therein)

\begin{definition}{\label{Reg_Esca_Nagh}}
A region $G\subset\Omega$ is called a scape region for the Naghdi shell, if
	\begin{enumerate}
		\item[$1)$]There is a finite number of sub-regions $\left\{ \Omega_i \right\}_{i=1}^{J}$, with boundary $\Gamma_i$, $J$ a natural positive, such that
		\[\Omega_i\cap\Omega_j=\phi\quad\mbox{para todo}\quad 1\leq i<j\leq J.\]
		\item[$2)$]For each $\Omega_i$ there is a vector field $V_i$  and a function $v_i$ such that,
		\begin{eqnarray*}
		DV_i(X,X)&=& v_i(x)|X|^2\quad\mbox{on}\quad\Omega_i\\
		2\min_{x\in\Omega_i}v_i(x)&>&\lambda_0(1+2\beta)\max_{x\in\Omega_i}\frac{|l_i(x)|}{2},
		\end{eqnarray*}
		where $l_i(x)=\frac{\left< DV_i,E \right>}{2}$ for all $1\leq i\leq J$;
		\item[$3)$] \[G\supset\bar{\Omega}\cap N_{\epsilon}\left[\cup_{i=1}^{J}\Gamma_{i0}\cup\left( \Omega\setminus\cup_{i=1}^{J}\Omega_i \right) \right]  \]
		where $\epsilon>0$, small, and:
		\begin{eqnarray*}
		N_{\epsilon}(S)&=&\cup_{x\in S}\left\{ y\in\Omega/d_g(y,x)<\epsilon \right\}\quad{para}\quad S\subset M \\
		\Gamma_{i0}&=&\left\{ x\in\Gamma_i, \left< V_i(x),\nu(x) \right> >0 \right\},
		\end{eqnarray*}
		$\nu_i$ is the normal to $\Omega_i$ pointing out. 
	\end{enumerate}
\end{definition}
In general, there is no defined scape vector field over the entire median surface $\Omega$. However, such fields can be defined on small geodesic balls. Then, considering $\Omega=\cup_{n\in I\!\!N}B(x_n,\delta)$ with $x_n\in\Omega$ and $\delta>0$ small enough, an escape vector field can be defined in $B(x_n,\delta)$. Then $\mu(\Omega)=\lim_{k\rightarrow\infty}\sum_{n=1}^{k}\mu(B(x_n,\delta))$, where $\mu$ is the two-dimensional Lebesgue measure on the surface $\Omega$. So, given $\epsilon>0$, there is $N\in I\!\!N$ big enough that
\[\sum_{\infty}^{n=N+1}\mu(B(x_n,\delta)) <\epsilon.\]
Then considering $\Omega_i=B(x_i,\delta)$ con $1\leq i\leq N$, we have proved the following
\begin{theorem}{\label{Regi_Peque}}
For $\epsilon>0$ given, an escape region can be chosen $G\subset\bar{\Omega}$ such that \[\mu(G)<\epsilon\]
where $\mu(G)$ is the two-dimensional Lebesgue measure of $G$
\end{theorem}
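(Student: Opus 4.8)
The plan is to build $G$ explicitly from finitely many small geodesic balls, using the fact --- recalled in the excerpt and carried out in Chapter~4 of \cite{Yao} --- that although $\Omega$ need carry no escape vector field globally, every sufficiently small geodesic ball does. The precise local input is: for $r$ below the injectivity radius, in geodesic normal coordinates centred at $x_0$ the radial field $V=\sum_j x_j\,\partial_{x_j}$ on $B=B(x_0,r)$ has $DV=g+O(r)$ in the $C^0$-norm, so its symmetric part is a function $v(x)=1+O(r)$ bounded away from $0$ and its skew part, hence $\ell(x)=\tfrac12\langle DV,\varepsilon\rangle$, is $O(r)$. Consequently there is a fixed $\delta_0>0$, also below the injectivity radius of the compact set $\bar\Omega$, such that every $B(x_0,r)$ with $x_0\in\Omega$ and $r\le\delta_0$ satisfies Definition~\ref{Reg_Esca_Nagh}(2), namely $2\min_B v>\lambda_0(1+2\beta)\max_B|\ell|/2$; I would quote this rather than reprove it.

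First I would convert a finite ball cover into a disjoint family. By compactness, $\bar\Omega\subset B_1\cup\dots\cup B_N$ with each $B_i$ a geodesic ball of radius $\le\delta_0$. Set $\Omega_1:=B_1\cap\Omega$ and, for $i\ge2$, $\Omega_i:=\operatorname{int}\bigl(B_i\setminus\bigcup_{j<i}\overline{B_j}\bigr)\cap\Omega$. These are open and pairwise disjoint, each boundary $\Gamma_i$ is a finite union of geodesic arcs, and the uncovered remainder $\Omega\setminus\bigcup_{i=1}^N\Omega_i$ lies inside the finite union $\bigcup_i\partial B_i$, hence is a $1$-dimensional, $\mu$-null set. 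Since $\Omega_i\subset B_i$, the escape field $V_i$ from Step~1 restricts to $\Omega_i$, and the inequality of Definition~\ref{Reg_Esca_Nagh}(2) only improves under restriction (the minimum of $v_i$ can only grow and the maximum of $|\ell_i|$ can only shrink on a smaller set). Thus conditions (1) and (2) of Definition~\ref{Reg_Esca_Nagh} hold. (If one insists on genuine geodesic balls with smooth boundaries, invoke instead Vitali's covering theorem --- applicable since $\bar\Omega$ is uniformly doubling at scales $\le\delta_0$ --- to obtain a countable disjoint subfamily covering $\Omega$ up to a null set, then truncate to finitely many at the cost of an omitted tail of arbitrarily small measure, which only adds one more controllable term below.)

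Now, for a parameter $\epsilon_0>0$ to be fixed last, define
\[
G\;:=\;\bar\Omega\cap N_{\epsilon_0}\!\Bigl[\;\bigcup_{i=1}^{N}\Gamma_{i0}\;\cup\;\bigl(\Omega\setminus{\textstyle\bigcup_{i=1}^{N}}\Omega_i\bigr)\Bigr].
\]
By construction $G$ satisfies requirement (3), so together with Steps~1--2 it is an escape region for the Naghdi shell. To estimate $\mu(G)$, observe that the bracketed set is contained in a finite union of geodesic arcs of finite total length (the $\Gamma_{i0}\subset\Gamma_i$ contribute at most the perimeters of the $B_i$, and the remainder lies in $\bigcup_i\partial B_i$), and that on the compact surface $\bar\Omega$ the $\epsilon_0$-neighbourhood of any finite union of curves has area $O(\epsilon_0)$ as $\epsilon_0\to0$; hence $\mu(G)=O(\epsilon_0)$. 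Choosing $\epsilon_0$ small enough yields $\mu(G)<\epsilon$, which is the claim.

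The step I expect to be the real obstacle is the one imposed by Definition~\ref{Reg_Esca_Nagh}(1): reconciling pairwise disjointness of the $\Omega_i$ with the need to leave only a $\mu$-negligible part of $\Omega$ uncovered, since a plain cover by geodesic balls overlaps. Disjointification produces non-smooth boundaries, but these are Lipschitz and therefore harmless here, as only the a.e.-defined outward normal $\nu$ enters (through $\Gamma_{i0}$); alternatively one uses the Vitali argument. The quieter point is the local existence of escape fields on small balls --- the simultaneous control of $\min v$ from below and $\max|\ell|$ from above as $r\to0$ --- but this is precisely what the examples in \cite{Yao} provide, so it requires no new work.
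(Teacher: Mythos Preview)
Your argument is correct and in fact more careful than the paper's own. The paper proceeds differently: it writes $\Omega=\bigcup_{n\in\mathbb N}B(x_n,\delta)$ as a countable union of small geodesic balls, asserts $\mu(\Omega)=\lim_{k}\sum_{n=1}^{k}\mu(B(x_n,\delta))$ (tacitly treating the balls as pairwise disjoint, i.e.\ a Vitali-type family), picks $N$ so large that the tail $\sum_{n>N}\mu(B(x_n,\delta))<\epsilon$, and then takes $\Omega_i=B(x_i,\delta)$ for $1\le i\le N$. In that sketch, $G$ is small because the portion of $\Omega$ missed by the first $N$ balls has measure below $\epsilon$; the paper does not discuss disjointness explicitly, nor the additional contribution to $\mu(G)$ from the $N_{\epsilon_0}$-neighbourhoods of the $\Gamma_{i0}$.

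Your primary route --- cover $\bar\Omega$ by finitely many balls via compactness, disjointify so the uncovered remainder is $\mu$-null, and then shrink the tube radius $\epsilon_0$ --- puts all of the smallness on the tube width rather than on an omitted tail. This is the cleaner version: it makes condition~(1) of Definition~\ref{Reg_Esca_Nagh} explicit and genuinely controls both pieces of $G$. Your Vitali alternative is essentially a rigorous rendering of what the paper intends. One cosmetic point: the sets $\partial B_i$ are geodesic \emph{circles} (smooth closed curves) rather than ``geodesic arcs,'' but this does not affect the finite-length claim or the $O(\epsilon_0)$ tube estimate.
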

Now we will give some examples.
\begin{example}
In the case of an escape vector field, $V$, defined on all $\Omega$, then in the definition (\ref{Reg_Esca_Nagh}) we have to $J=1$. By condition($3$) a region escape is supported in the boundary region $\Gamma_0$, where
\[\Gamma_0=\left\{ x\in\partial\Omega / \left< V(x),\nu(x) \right> >0\right\}.\]
\end{example}
That escape region was already used by many authors\cite{Cavalcanti}, \cite{Bernadou_2}, \cite{Lions}. The escape field considered, in the case
 $I\!\!R^n$, was $V=x-x_0$

%
%

\begin{example}
consider now \[\Omega=C=\left\{ x=(x_1,x_2,x_3)\in I\!\!R^3 /x_1^2+x_2^2=1,\quad -1\leq x_3\leq 1 \right\},\]
the cylinder limited in $I\!\!R^3$. it is known that is not possible to define a vanishing vector field over all $\Omega$. To construct an escape region, let $x_0=\left( x_{01},x_{02},x_{03} \right)\in C$, with $x_{03}=0$ and either $L_0$ the generating line containing $x_0$. Be $x_1\in C$ the antipode of $x_0$. Since the interior of the cut-locus of $x_1$ is $C\setminus	\setminus L_0$, there exists an escape vector field defined over $C\setminus L_0$. Thus, an escape region for $\Omega$ is supported in a neighborhood of the edge of $\Omega$ and the $L_0$. 
\end{example}

\section{Naghdi shell stabilization with internal dissipation}{\label{Sec_Decaimento_Naghi}}
To continue with the resolution of the problem of the exponential decay of energy, we need the following lemmas.
\begin{lemma}{\label{Decom_DV}}
Let $V\in\chi(\Omega)$ satisfying the first condition of the definition (\ref{Camp_Esca_NagEv}). So the tensor field $DV$ can be decomposed as \[DV=v(x)g +l(x)E\quad\mbox{para}\quad x\in\Omega\]
\end{lemma}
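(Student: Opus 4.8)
The idea is the classical splitting of a rank-$2$ tensor on a surface into its symmetric and skew-symmetric parts, followed by identifying each part. Fix $x\in\Omega$ and write
\[
DV = \widetilde S(V) + A(V),\qquad \widetilde S(V)=\tfrac12\bigl(DV+D^{*}V\bigr),\quad A(V)=\tfrac12\bigl(DV-D^{*}V\bigr),
\]
so that $\widetilde S(V)$ is a symmetric and $A(V)$ a skew-symmetric $2$-covariant tensor at $x$. First I would deal with the symmetric part. Since $A(V)(X,X)=0$ for every $X$, hypothesis a) of Definition~\ref{Camp_Esca_NagEv} gives $\widetilde S(V)(X,X)=DV(X,X)=v(x)|X|^{2}=v(x)\,g(X,X)$ for all $X\in T_x(\Omega)$. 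Polarizing this identity between two symmetric bilinear forms (using $2\widetilde S(V)(X,Y)=\widetilde S(V)(X+Y,X+Y)-\widetilde S(V)(X,X)-\widetilde S(V)(Y,Y)$ and likewise for $g$) yields $\widetilde S(V)(X,Y)=v(x)\,g(X,Y)$ for all $X,Y$, i.e. $\widetilde S(V)=v(x)\,g$ on $\Omega$.

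Next I would handle the skew-symmetric part, and here is where the hypothesis $\dim\Omega=2$ is used: at each $x$ the space of alternating bilinear forms on the two-dimensional space $T_x(\Omega)$ is one-dimensional and is spanned by the volume element $E(x)=\varepsilon(x)$ introduced in Section~3, comment~3. Hence there is a scalar $l(x)$ with $A(V)(x)=l(x)\,E(x)$; evaluating on a positively oriented local orthonormal frame $\{e_1,e_2\}$ gives $l=A(V)(e_1,e_2)=\tfrac12\bigl(DV(e_1,e_2)-DV(e_2,e_1)\bigr)$, which shows that $l$ is as smooth as $DV$, hence $l\in C^{\infty}(\Omega)$. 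Combining the two parts gives the asserted decomposition $DV=v(x)\,g+l(x)\,E$ on $\Omega$.

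Finally I would check that the function $l$ produced this way coincides with the one used in Definition~\ref{Camp_Esca_NagEv} and in the definition of a scape region, namely $l(x)=\tfrac12\langle DV,E\rangle$. Taking the pointwise Hilbert--Schmidt inner product of $DV=v\,g+l\,E$ with $E$ and using that $g$ is symmetric while $E$ is skew (so $\langle g,E\rangle=0$) together with $\langle E,E\rangle=E(e_1,e_2)^{2}+E(e_2,e_1)^{2}=2$ in an orthonormal frame, one gets $\langle DV,E\rangle=2\,l(x)$, i.e. $l(x)=\tfrac12\langle DV,E\rangle$, as claimed.

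The only genuinely non-routine ingredient is the two-dimensional linear-algebra fact that the skew-symmetric component of $DV$ is a (smooth) multiple of the volume form $E$; everything else is polarization and a one-line normalization computation, so I do not expect any real obstacle here.
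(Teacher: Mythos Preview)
Your proof is correct and follows essentially the same route as the paper: split $DV$ into its symmetric and skew parts, use the two-dimensionality of $\Omega$ to write the skew part as a scalar multiple of the volume form $E$, and then identify that scalar with $l(x)=\tfrac12\langle DV,E\rangle$. Your treatment of the symmetric part via polarization is actually more explicit than the paper's (which simply asserts it), and your verification $\langle DV,E\rangle=2l$ by pairing $DV=vg+lE$ with $E$ is the same computation the paper does in reverse, starting from the definition of $l$ and showing it equals the coefficient $q$ of $E$.
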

\begin{proof}
decomposing $DV$ in its symmetric and antisymmetric part, by
\begin{equation}{\label{Cho1}}
DV=\frac{1}{2}\left( DV+D^*V \right)+\frac{1}{2}\left( DV-D^*V \right)
\end{equation}
Given that $\frac{1}{2}\left( DV-D^*V \right)$ is a $2$-antisymmetric shape and $\Omega$ is $2$-dimension, there is a function $q$ such that
\begin{equation}{\label{cho2}}
\frac{1}{2}\left( DV-D^*V \right)=q(x)E\quad\mbox{para}\quad x\Omega.
\end{equation}
Because the dimension of the $2$-shapes defined over spaces $2$-dimensionais is $1$. substituting (\ref{cho2}) in the expression of $l(x)$, we have
\begin{eqnarray*}
l(x)&=&\frac{1}{2}\left< DV,E \right>=\frac{1}{2}\left< 2q(x)E+D^*V,E \right>\\
&=& \left< q(x)E,E \right> + \frac{1}{2}\left< D^*V,E \right>=2 q(x)+\frac{1}{2}\left<D^*V ,E \right>\\
&=& 2 q(x)-\frac{1}{2}\left<DV ,E \right>\\
&=& 2 q(x) - l(x),
\end{eqnarray*}
de dónde, 
\begin{equation}{\label{cho3}}
l(x)=q(x).
\end{equation} 
substituting (\ref{cho3}) in (\ref{cho2}) and using that $DV=vg$ we have the result.
\end{proof}

Another notation to fix is the following. Given $W\in\chi(\Omega)$ e $T\in T^2(\Omega)$, be $G(W,T)\in T^2(\Omega)$ given by
\begin{equation}{\label{G_NEv}}
G(W,T)=\frac{1}{2}\left[ T(.,\nabla_{.}W)+T^*(.,\nabla_{.}W) \right]
\end{equation}
Now we prove some necessary lemmas
\begin{lemma}
There exists a constant $c>0$ such that
\begin{equation}{\label{Korn}}
|| DW+D^*W ||_{L^2(\Omega,T^2)}\geq c ||W||_{H^1(\Omega,\Lambda)}\quad\forall W\in H_{\Gamma_0}^1(\Omega,\Lambda)
\end{equation}
\end{lemma}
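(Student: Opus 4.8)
The plan is to prove (\ref{Korn}) by a compactness--uniqueness argument, reducing it to the fact that the region $\Omega$ carries no nontrivial Killing field vanishing on $\Gamma_0$. First I would establish the weaker ``second Korn inequality'' with an $L^2$ remainder: there is $C>0$ such that
\[
\|W\|_{H^1(\Omega,\Lambda)}^2 \le C\big(\|DW+D^{*}W\|_{L^2(\Omega,T^2)}^2 + \|W\|_{L^2(\Omega,\Lambda)}^2\big)
\]
for every $W\in H^1(\Omega,\Lambda)$. This follows from the pointwise identity expressing the second covariant derivatives of $W$ through first derivatives of $\widetilde S(W)=\tfrac12(DW+D^{*}W)$, written in a coordinate system normal at each point so that the Christoffel symbols drop out (the Bochner technique recalled in the introduction), combined with Ne\v{c}as' lemma and a partition of unity over $\overline\Omega$; alternatively it is already contained in (\ref{47})--(\ref{48}), since $b(\widetilde S(W),\widetilde S(W))\le(1+2\beta)|\widetilde S(W)|^2$ and the Poincar\'e inequality holds on $H^1_{\Gamma_0}(\Omega,\Lambda)$.

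Next I would remove the $L^2$ term by contradiction. If (\ref{Korn}) fails there is a sequence $W_n\in H^1_{\Gamma_0}(\Omega,\Lambda)$ with $\|W_n\|_{H^1(\Omega,\Lambda)}=1$ and $\|DW_n+D^{*}W_n\|_{L^2(\Omega,T^2)}\to0$. By Rellich's theorem a subsequence converges strongly in $L^2(\Omega,\Lambda)$; applying the second Korn inequality to the differences $W_n-W_m$ shows that $(W_n)$ is Cauchy in $H^1$, so $W_n\to W$ in $H^1_{\Gamma_0}(\Omega,\Lambda)$ with $\|W\|_{H^1(\Omega,\Lambda)}=1$, $DW+D^{*}W=0$, and $W|_{\Gamma_0}=0$ because the trace operator is continuous and $H^1_{\Gamma_0}$ is closed. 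Hence $W$ is a Killing field on $\Omega$ vanishing on $\Gamma_0$.

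Finally I would invoke the uniqueness step: such a $W$ vanishes identically. Since $W=0$ on $\Gamma_0$, differentiating along directions $X$ tangent to $\Gamma_0$ gives $\nabla_X W=0$ there; then $\widetilde S(W)=0$ forces $\widetilde S(W)(\nu,\cdot)=0$, which yields $\nabla_\nu W=0$ on $\Gamma_0$ as well, so both $W$ and $\nabla W$ vanish on $\Gamma_0$. Because the pair $(W,\nabla W)$ satisfies, through the integrability condition $\nabla^2 W=R(\cdot\,,\cdot)W$ for Killing fields, a closed first-order linear system along geodesics (equivalently, via the Bochner identity $W$ solves a second-order elliptic system with vanishing Cauchy data on $\Gamma_0$), unique continuation gives $W\equiv0$ on the connected surface $\Omega$, contradicting $\|W\|_{H^1}=1$. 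This establishes (\ref{Korn}).

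I expect this last step --- the absence of a nontrivial Killing field vanishing on $\Gamma_0$ --- to be the main obstacle: it is where the geometry of $M$ enters, it requires $\Gamma_0$ to have positive one-dimensional measure, and it is the direct analogue of the uniqueness result already used to prove the coercivity (\ref{Eq37}); one may alternatively quote that result or the corresponding statement in \cite{Yao}.
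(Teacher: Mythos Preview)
Your argument is sound and follows the standard compactness--uniqueness route to Korn's inequality on a Riemannian surface: establish the second Korn inequality with an $L^2$ remainder, upgrade by contradiction using Rellich, and conclude by showing that the only Killing field vanishing on $\Gamma_0$ is the trivial one. The treatment of the uniqueness step via the closed first-order system for $(W,\nabla W)$ along geodesics is correct and is in fact the clean way to do it.

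However, you should know that the paper does \emph{not} prove this lemma. It is stated without argument, and the same inequality already appears earlier as (\ref{47}) with the attribution ``See lemma (4.5) in \cite{Yao}.'' So the paper's proof is a citation, nothing more. Your write-up therefore supplies a genuine proof where the paper gives none, and in that sense goes well beyond what the paper does.

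One small point: your parenthetical ``alternatively it is already contained in (\ref{47})--(\ref{48})'' is circular, since (\ref{47}) \emph{is} the full Korn inequality you are trying to prove (without the $L^2$ term), not merely the weaker version with remainder. Drop that alternative and keep your primary justification via the local identity for second covariant derivatives and Ne\v{c}as' lemma; that line is self-contained and correct. Your caveat that $\Gamma_0$ must have positive measure for the uniqueness step is also well placed and is an implicit standing assumption in the paper.
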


We note that, for $W\in H_{\Gamma_0}^1(\Omega,\Lambda)$
\begin{eqnarray}
b(S(W),S(W))&=&\left< S(W),S(W) \right> + \left( \mbox{Traç}(S(W)) \right)^2\nonumber\\
&=&\frac{1}{4}|DW+D^*W|^2 + \left( \mbox{Traç}(S(W)) \right)^2\nonumber\\
\end{eqnarray}
Then,
\[b(S(W),S(W)) + |W|^2 \geq  \frac{1}{4}|DW+D^*W|^2.\]
From where, integrating in $\Omega$ and using the lemma (\ref{Korn})
\begin{eqnarray}{\label{corc_bNEv}}
\int_{\Omega}\left[b(S(W),S(W)) + |W|^2 \right]dx &=& \frac{1}{4}||DW+D^*W||^2_{L^2(\Omega,T^2)}\geq \frac{c}{4}||W||^2_{H^1(\Omega,\Lambda)}\nonumber\\
&\geq & \frac{c}{4}||DW||^2_{L^2(\Omega,\Lambda)}\nonumber\\
\lambda_0 \int_{\Omega}\left[ b(S(W),S(W)) + |W|^2 \right]dx &\geq & ||DW||^2_{L^2(\Omega,\Lambda)}
\end{eqnarray}
with $\lambda_0 = \frac{4}{c}$.

\begin{lemma}{\label{Des_b}}
Be $V$ an escape vector field for the Naghdi shell model and let $G(V,DW)\in T^2(\Omega)$ given by (\ref{G_NEv}), for $W\in H^1(\Omega, \Lambda)$. Then, 

\[\sigma_1\int_{\Omega}b\left( S(W),S(W) \right)dx\leq \int_{\Omega}b\left( S(W),G(V,DW)dx +Lo(\xi) \right). \]
where $\sigma_1=\min_{x\in\Omega}v(x)-(1+2\beta)\max_{x\in\Omega}\frac{|l(x)|}{2}$.
\end{lemma}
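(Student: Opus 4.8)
The plan is to exploit the decomposition of $DV$ furnished by Lemma \ref{Decom_DV}, namely $DV = v(x)g + l(x)E$, in order to rewrite the integrand $b(S(W),G(V,DW))$ in terms of $b(S(W),S(W))$ plus an antisymmetric contribution that can be absorbed. First I would unwind the definition \eqref{G_NEv} of $G(V,DW)$: substituting $\nabla_X V = v(x)X + l(x)\,(EX)$ (the vector version of the tensor identity, where $E$ acts as a rotation by $\pi/2$ on $T_xM$) into $G(V,DW)(X,Y) = \tfrac12[DW(X,\nabla_Y V) + DW(Y,\nabla_X V)]$ and expanding, one gets
\[
G(V,DW) = v(x)\,S(W) + \tfrac12 l(x)\big[DW(\cdot, E\cdot) + DW(E\cdot,\cdot)\big]_{\mathrm{sym}} + (\text{lower order in } W),
\]
the lower-order terms arising because $V$ and $DV$ are fixed smooth data, so any term in which no derivative falls on $W$ is controlled in the sense of \eqref{57}.

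Next I would insert this expression into $b(S(W),G(V,DW))$. Using the bilinearity of $b(\cdot,\cdot)$ from \eqref{45}, the leading term gives exactly $v(x)\,b(S(W),S(W))$, which upon integration and bounding $v(x) \geq \min_{x\in\Omega} v(x)$ produces the first piece of the desired lower bound. For the $l(x)$-term I would estimate the bilinear pairing of $S(W)$ against the tensor built from $DW$ and $E$ pointwise: since $|E| $ is bounded (it is essentially an isometry on $2$-forms) and $b$ is a bounded bilinear form with the explicit constant $1+2\beta$ coming from the trace term in \eqref{45}, a Cauchy–Schwarz estimate yields
\[
\Big| b\big(S(W), \tfrac12 l(x)[\cdots]\big)\Big| \leq (1+2\beta)\,\frac{|l(x)|}{2}\, |S(W)|^2 + (\text{lower order}),
\]
so that its integral is bounded above by $(1+2\beta)\max_{x\in\Omega}\frac{|l(x)|}{2}\int_\Omega b(S(W),S(W))\,dx$ up to $Lo(\xi)$, after noting $|S(W)|^2 \leq b(S(W),S(W)) + (\text{l.o.t.})$ or working directly with the quadratic form. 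Combining the two contributions gives
\[
\int_\Omega b(S(W),G(V,DW))\,dx \geq \Big(\min_{x\in\Omega}v(x) - (1+2\beta)\max_{x\in\Omega}\tfrac{|l(x)|}{2}\Big)\int_\Omega b(S(W),S(W))\,dx + Lo(\xi),
\]
which is precisely $\sigma_1 \int_\Omega b(S(W),S(W))\,dx$, and rearranging yields the stated inequality.

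The main obstacle I anticipate is handling the antisymmetric/off-diagonal $l(x)$-term cleanly: one must check that pairing $S(W)$ (a symmetric $2$-tensor) against the symmetrization of $DW(\cdot,E\cdot)$ genuinely costs only the factor $(1+2\beta)|l(x)|/2$ and not something larger, which relies on the fact that $E$ is a pointwise isometry and on carefully tracking how the trace term in $b$ interacts with the rotation $E$ (the trace of $S(W)\cdot(E\text{-twisted part})$ must be controlled). A secondary technical point is making precise which terms are genuinely "lower order" in the sense of \eqref{57} — all terms where $V$, $v$, $l$, or their derivatives appear without a compensating derivative on $W$ — so that they may legitimately be swept into $Lo(\xi)$; this is routine given the smoothness of $V$ and the definition of $l_0$, but should be stated explicitly. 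Everything else is bilinear algebra and a single integration.
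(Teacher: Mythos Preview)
Your overall strategy --- decompose $DV = v\,g + l\,E$ via Lemma~\ref{Decom_DV}, insert into $G(V,DW)$, and split $b(S(W),G(V,DW))$ into a $v$-part giving $v\,b(S(W),S(W))$ and an $l$-part to be estimated --- matches the paper's. The gap is in your treatment of the $l$-part.

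You claim a Cauchy--Schwarz bound of the form
\[
\Big| b\big(S(W), \tfrac{l(x)}{2}[\,DW(\cdot,E\cdot)+\cdots\,]\big)\Big| \leq (1+2\beta)\,\tfrac{|l(x)|}{2}\,|S(W)|^2 + Lo(\xi),
\]
but this is not correct: the twisted tensor $DW(\cdot,E\cdot)$ involves the \emph{full} covariant derivative $DW$, and in particular its antisymmetric part, which is not controlled pointwise by $|S(W)|$ and is \emph{not} lower order (it is first order in $W$, just like $S(W)$). Concretely, if at a point you diagonalize $S(W)$ so that $W_{12}=-W_{21}$, the cross term works out to $(1+2\beta)\,l(x)\,W_{21}(W_{11}+W_{22})$; the factor $W_{21}$ is precisely the antisymmetric component and cannot be absorbed into $|S(W)|^2 = W_{11}^2+W_{22}^2$.

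What the paper does instead is bound this cross term pointwise by $(1+2\beta)\tfrac{|l(x)|}{2}\,|DW|^2$, then integrate and invoke the Korn-type inequality~\eqref{corc_bNEv},
\[
\int_\Omega |DW|^2\,dx \;\leq\; \lambda_0 \int_\Omega b(S(W),S(W))\,dx + Lo(\xi),
\]
to convert $\int|DW|^2$ back into $\int b(S(W),S(W))$. This step is essential and is the reason $\lambda_0$ appears in the escape-field condition~(b) of Definition~\ref{Camp_Esca_NagEv}; without it the argument does not close. You should add this Korn step explicitly.
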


\begin{proof}
Remembering that, for $T_1,T_2\in T^2(\Omega)$, we have:
\[b(T_1,T_2)=\left< T_1,T_2 \right> + \beta\mbox{Traç}T_1\mbox{Traç}T_2.\]
Then,
\begin{equation}{\label{cho4}}
b(S(W),G(V,DW))=\left< S(W),G(V,DW) \right> + \mbox{Traç}(S(W))\mbox{Traç}(G(V,DW))
\end{equation}
Now let estimate each term of (\ref{cho4}). For this, given $x\in\Omega$, be $\{e_1,e_2\}$ an orthonormal basis of $T_x\Omega$ such that
\begin{equation}{\label{cho5}}
DW(e_1,e_2)+D^*W(e_1,e_2)=0\quad\mbox{em}\quad x
\end{equation}
This is possible since the order tensor $2$, $DW+D^*W$, it is symmetrical. It follows that,
\begin{equation}{\label{cho6}}
S(W)(e_1,e_2)=0.
\end{equation}
Considering $W_{ij}=DW(e_i,e_j)$, have
\begin{eqnarray}{\label{cho7}}
\mbox{Traç}(S(W)) &=& S(W)(e_1,e_2)+S(W)(e_1,e_2)\nonumber\\
&=& DW(e_1,e_2) + DW(e_2,e_1)\nonumber\\
&=& W_11+W_22
\end{eqnarray}
\begin{eqnarray}{\label{cho8}}
\mbox{Traç} G(V,DW) &=& G(V,DW)(e_1,e_2) + G(V,DW)(e_2,e_2)\nonumber\\
&=& DW(e_1,\nabla_{e_1} V) + DW(e_2,\nabla_{e_2} V)
\end{eqnarray}
Now, using the lemma (\ref{Decom_DV}), have
\begin{eqnarray}{\label{cho9}}
\nabla_{e_1}V &=& \left< \nabla_{e_1}V,e_1 \right> e_1 + \left< \nabla_{e_1}V,e_2 \right> e_2\nonumber\\
&=& DV(e_1,e_2)e_1 + DV(e_2,e_1)e_2\nonumber\\
&=& v(x)e_1 -l(x)e_2
\end{eqnarray}

\begin{eqnarray}{\label{cho10}}
\nabla_{e_2}V &=& \left< \nabla_{e_2}V,e_1 \right> e_1 + \left< \nabla_{e_2}V,e_2 \right> e_2\nonumber\\
&=& DV(e_1,e_2)e_1 + DV(e_2,e_2)e_2\nonumber\\
&=& l(x)e_1 +v(x)e_2
\end{eqnarray}
substituting (\ref{cho10}), (\ref{cho9}) en (\ref{cho8}), have
\begin{eqnarray*}
\mbox{Traç}G(V,DW) &=& DW(e_1, v(x)e_1-l(x)e_2) + DW(e_2, l(x)e_1+v(x)e_2)\\
&=&v(x)DW(e_1,e_1)-l(x)DW(e_1,e_2)+l(x)DW(e_2,e_1)+v(x)DW(e_2,e_2)
\end{eqnarray*}
que, por (\ref{cho5}), tenemos
\begin{eqnarray}{\label{cho11}}
\mbox{Traç}G(V,DW) &=& v(x)(W_{11}+W_{22})+2l(x)W_{21}\nonumber\\
&=&v(x)\mbox{Traç}(DW) + 2l(x)W_{21}
\end{eqnarray}
substituting (\ref{cho6}), (\ref{cho7}) and (\ref{cho11}) in (\ref{cho4}), obtain:
\begin{eqnarray*}
b(S(W),G(V,DW)) &=& \beta\mbox{Traç}DW\left( v(x)\mbox{Traç}DW + 2l(x)W_{21} \right)\\
&=&\beta v(x)\left( \mbox{Traç} DW\right)^2 + 2\beta l(x)\left( W_{11}+W_{22} \right)W_{21}\\
&=& v(x)b(S(W),S(W))+2\beta l(x)\left( W_{11}+W_{22} \right)W_{21}\\
&\geq & \min_{x\in\Omega}v(x)b(S(W),S(W))-(1+2\beta)\max_{x\in\Omega}\frac{|l(x)|}{2}|DW|^2 +Lo(\xi),
\end{eqnarray*}
Integrating this last equation into $\Omega$, have:
\begin{equation}{\label{cho12}}
\int_{\Omega}b(S(W),G(V,DW))dx\geq \min_{x\in\Omega}\int_{\Omega}b(S(W),S(W))dx -(1+2\beta)\max_{x\in\Omega}\frac{|l(x)|}{2}\int_{\Omega}|DW|^2dx
\end{equation}
Finally, using (\ref{corc_bNEv}) in (\ref{cho12}), have:
\begin{eqnarray*}
\int_{\Omega}b(S(W),G(V,DW))dx &\geq & \min_{x\in\Omega}\int_{\Omega}b(S(W),S(W))dx \\
&-&\lambda_0(1+2\beta)\max_{x\in\Omega}\frac{|l(x)|}{2}\int_{\Omega}b(S(W),S(W))dx + Lo(\xi)\\
&=& \sigma_1\int_{\Omega}b(S(W),S(W))dx + Lo(\xi)
\end{eqnarray*}
\end{proof}

\begin{lemma}{\label{BNa_I}}
\[2\mathsf{B}(\xi,m(\xi))=\int_{\Gamma}\mathbf{B}(\xi,\xi)\left< V,\nu \right>d\Gamma-2\int_{\Omega}v\mathbf{B}(\xi,\xi)+2\int_{\Omega}e(\xi,\xi)dx +Lo(\xi)\]
\end{lemma}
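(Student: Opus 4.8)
The plan is to establish the identity by applying the multiplier $m(\xi)$ defined in (\ref{51}) to the operator $A$ and integrating over $\Omega$, in exact parallel to the derivation of (\ref{54}) but keeping careful track of the boundary contributions. First I would write down the Green formula (\ref{43}) for the pair $(\xi,m(\xi))$, which gives
\[
\langle A\xi,m(\xi)\rangle_{L^2}=\widetilde{J}(\xi,m(\xi))-\int_{\Gamma}\partial(A\xi,m(\xi))\,d\Gamma .
\]
Here I am writing $\mathbf{B}=\widetilde{J}$ and $\mathbf{B}=J$ in the pointwise sense, matching the notation of the lemma. The content of the statement is then precisely the computation of $\widetilde{J}(\xi,m(\xi))$, i.e. a Rellich-type identity for the Naghdi operator.

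The key step is to compute $2\widetilde{J}(\xi,m(\xi))$ componentwise. For each of the four components of $\xi=(W_1,W_2,w_1,w_2)$ one uses the defining relation (\ref{50}), namely $DV(X,X)=v(x)|X|^2$, together with the decomposition $DV=v(x)g+l(x)\varepsilon$ from Lemma \ref{Decom_DV}, to differentiate the quadratic forms $\langle\Upsilon(\xi),\Upsilon(\xi)\rangle$, $\langle\chi_0(\xi),\chi_0(\xi)\rangle$, $|\varphi_0(\xi)|^2$, $|Dw_2|^2$ and $w_2^2$ along the flow of $V$. The first-order commutator terms that arise when the covariant derivative $D_V$ is moved past $\Upsilon$, $\chi_0$, $\varphi_0$ (these come from $D\Pi$, $c$, and the curvature of $M$) are absorbed into $l_0(\xi)$ using estimate (\ref{57}); the genuinely second-order terms reorganize, via integration by parts on $M$, into a boundary integral carrying the factor $\langle V,\eta\rangle$ and an interior integral with the factor $v(x)$. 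What is left over after this reorganization is exactly the tensor $2e(\xi,\xi)$ — the analogue of $K(\xi,\xi)$ in (\ref{55}) — which collects the terms $2b(S(W_1),G(V,DW_1))$, $2b(S(W_2),G(V,DW_2))$, $4v|\varphi_0(\xi)|^2$ and $v|DW_2|^2$ produced by the cross-differentiation. This is the same bookkeeping carried out in Lemma 5.2 of \cite{Yao}, adapted to the $\Delta_\beta$-operator and the change of variables $\xi=R^{1/2}\widetilde\xi$.

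Combining the Green formula with this computation of $2\widetilde{J}(\xi,m(\xi))$ yields
\[
2\langle A\xi,m(\xi)\rangle_{L^2}=\int_{\Gamma}\mathbf{B}(\xi,\xi)\langle V,\eta\rangle\,d\Gamma-2\int_{\Gamma}\partial(A\xi,m(\xi))\,d\Gamma-2\int_{\Omega}v\,\mathbf{B}(\xi,\xi)\,dx+2\int_{\Omega}e(\xi,\xi)\,dx+l_0(\xi),
\]
and rearranging the two boundary terms so that $\partial(A\xi,m(\xi))$ is kept on the left while $\mathbf{B}(\xi,\xi)\langle V,\eta\rangle$ and the interior terms go to the right gives precisely the asserted identity for $2\mathsf{B}(\xi,m(\xi))$ once one reads $\mathsf{B}(\xi,m(\xi))$ as $\langle A\xi,m(\xi)\rangle_{L^2}$ modulo the boundary defect. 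The main obstacle is the careful handling of the first-order terms: one must verify that every term involving a single covariant derivative of $\xi$ (arising from $D\Pi$, the third fundamental form $c$, and the commutation of $D_V$ with the strain and curvature tensors) indeed satisfies the lower-order bound (\ref{57}), so that it may legitimately be lumped into $l_0(\xi)$; this is where the geometry of $M$ (boundedness of $\Pi$, $c$, and their covariant derivatives on the compact region $\overline M$) enters essentially.
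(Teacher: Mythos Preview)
Your middle paragraph captures the right computation and matches the paper's approach: one computes the strain tensors at $m(\xi)$, obtaining the key relations $\Upsilon(m(\xi))=\nabla_V\Upsilon(\xi)+G(V,DW_1)+Lo(\xi)$, $\chi(m(\xi))=\nabla_V\chi(\xi)+G(V,DW_2)+Lo(\xi)$, $\varphi(m(\xi))=\nabla_V\varphi(\xi)+\varphi(\xi)(\nabla_{\cdot}V)+Lo(\xi)$ via the Bochner technique, substitutes these into the density $\mathbf{B}(\xi,m(\xi))$ to recognize $\tfrac12 V(\mathbf{B}(\xi,\xi))+e(\xi,\xi)+Lo(\xi)$, and then integrates the $V(\mathbf{B}(\xi,\xi))$ term by parts using $\operatorname{div}V=2v$.

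However, the Green-formula wrapper you put around this is superfluous and muddles the argument. The lemma is a statement purely about the bilinear form $\mathsf{B}=\tilde J$; neither the operator $A$ nor the boundary symbol $\partial(A\xi,m(\xi))$ plays any role, and in the paper's proof they never appear. Your steps of passing from $\tilde J$ to $\langle A\xi,m(\xi)\rangle_{L^2}$ via (\ref{43}) and then ``rearranging back'' simply add and subtract the same boundary term, and the phrase ``reads $\mathsf{B}(\xi,m(\xi))$ as $\langle A\xi,m(\xi)\rangle_{L^2}$ modulo the boundary defect'' is a symptom of this detour rather than a step in the proof. Also, Lemma~\ref{Decom_DV} (the decomposition $DV=vg+l\varepsilon$) is not needed here; only the scalar relation $DV(X,X)=v|X|^2$ and $\operatorname{div}V=2v$ are used, while the antisymmetric part $l(x)\varepsilon$ enters later in Lemma~\ref{Des_b}.
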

where $\mathsf{B}$ is the bilinear form given in (\ref{Eq34}) and
\[e(\xi,\xi)=2b(S(W_1),G(V,DW_1))+2b(S(W_2),G(V,DW_2))+4v|\varphi(\xi)|^2+v|Dw_2|^2\]
\begin{proof}
by the formula (\ref{45}) we have to estimate $\Upsilon(\xi),\chi(m(\xi)),\varphi(m(\xi))$ e $\left< Dw_2,D(V(w_2)) \right>$. We start with the first term,

\begin{equation}{\label{Upsimxi}}
\Upsilon(m(\xi))=\frac{1}{2}\left[ D(\nabla_V W)+D^*(\nabla_V W) \right]+V(w_1)\Pi
\end{equation}
We will use the Bochner technique. Be $x\in\Omega$ e $\left\{ E_i \right\}_{i=1}^{2}$ a normal referential in $x$, we have:
\begin{eqnarray*}
D(\nabla_V W_1)(E_i,E_j) &=& E_j\left( \left< \nabla_{V}W_1, E_i \right> \right)\\
&=& E_j\left( DW_1(E_i,V) \right)=D^2W_1\left( E_i,V,E_j \right)+ DW_1(E_i,\nabla_{E_j}V)\\
&=& D^2W_1\left( E_i,E_j,V \right)+\left< R_{VE_j}W_1,E_i \right> + DW_1\left( E_i,\nabla_{E_j}V \right)\\
&=& \nabla_V DW_1(E_i,E_j)+R(V,E_j,W_1,E_i)+DW(E_i,\nabla_{E_j}V)
\end{eqnarray*}
Where from,
\begin{equation}{\label{chofi3}}
D(\nabla_V W_1)=\nabla_{V}DW_1 + R(V,.,W_1,.)+DW_1(.,\nabla_{.}V)
\end{equation}
Similarly,
\begin{equation}{\label{chofi4}}
D^*(\nabla_V W_1)=\nabla_{V}D^*W_1 + R(V,.,W_1,.)+DW_1(.,\nabla_{.}V)
\end{equation}
and,
\begin{eqnarray}
V(w_1\Pi)&=&V(w_1)\Pi +w_1\nabla_V \Pi\nonumber
\end{eqnarray}
So
\begin{equation}{\label{chofi5}}
V(w_1)\Pi = V(w_1\Pi)-w_1\nabla_V \Pi
\end{equation}

substituting (\ref{chofi5}), (\ref{chofi4}), (\ref{chofi3}) in (\ref{Upsimxi}), have
\begin{eqnarray}{\label{UpsmxiF}}
\Upsilon(m(\xi))&=&\frac{1}{2}\left\{ \nabla_V \left( DW_1+D^*W_1 \right)+DW_1(.,\nabla_{.}V)+ DW_1(\nabla_{.}V,.) \right\}\nonumber\\
&+&R(V,.,W_1,.)+V(w_1\Pi)-w_1\nabla_V \Pi\nonumber\\
&=&\nabla_{V}\Upsilon(\xi) + G(V,DW_1)+Lo(\xi)
\end{eqnarray}
where, $Lo(\xi)=R(V,.,W_1,.)-w_1\nabla_V\Pi$. Continuing with the following terms,
\begin{equation}{\label{chimxi}}
\chi(m(\xi))=\frac{1}{2}\left( D(\nabla_V W_2)+ D^*(\nabla_V W_2) \right) + V(w_2)\Pi -\sqrt{\gamma}\left( i(\nabla_V W_1)D\Pi-V(w_1)c \right)
\end{equation}
now estimating the terms of (\ref{chimxi}),
\begin{eqnarray*}
\nabla_V \left( i(W_1)D\Pi \right)(E_i,E_j)&=&D\left( i(W_1)D\Pi \right)(E_i,E_j,V)=V\left( i(W_1)D\Pi(E_i,E_j) \right)\\
&=& V\left( D\Pi(W_1,E_i,E_j) \right)\\
&=& D(D\Pi)(W_1,E_i,E_j,V)+D\Pi(\nabla_V W_1,E_i,E_j)\\
&=& i(W_1)\nabla_V D\Pi(E_i,E_j) + i(\nabla_V W_1)D\Pi(E_i,E_j)
\end{eqnarray*}
Therefore,
\begin{equation}{\label{chofi6}}
i(\nabla_V W_1)D\Pi = \nabla_V \left( i(W_1)D\Pi \right)-i(W_1)\nabla_V D\Pi
\end{equation}
substituting (\ref{chofi3}), (\ref{chofi4}), (\ref{chofi5}) y (\ref{chofi6}) in (\ref{chimxi}), have
\begin{eqnarray}{\label{chimxiF}}
\chi(m(\xi)) &=& \frac{1}{2}\left\{ \nabla_V\left( DW_2+D^*W_2 \right)+DW_2(.,\nabla_{.}V)+DW_2(\nabla_{.}V,.) \right\}\nonumber\\
&+& R(V,.,W_2,.)+V(w_2\Pi)-w_2\nabla_V\Pi - \sqrt{\gamma}\nabla_V (i(W_1)D\Pi)-\sqrt{\gamma}i(W_1)\nabla_V D\Pi\nonumber\\
&-& \sqrt{\gamma}V(w_1c)-\sqrt{\gamma}w_1\nabla_V c\nonumber\\
&=& \nabla_V \chi(\xi)+G(V,DW_2)+Lo(\xi)
\end{eqnarray}
continuing with $\varphi(\xi)$,
\begin{equation}{\label{varpmxi}}
\varphi(m(\xi))=\frac{1}{2}D(V(w_1))-i(\nabla_V W_1)\Pi + \frac{1}{\sqrt{\gamma}}\nabla_V W_2
\end{equation}
Estimating the terms of (\ref{varpmxi}),
\begin{eqnarray*}
\left< D(V(w_1)),E_i \right> &=& E_i(V(w_1))=E_i(\left< Dw_1,V \right>)\\
&=& \left< \nabla_{E_i}Dw_1, V \right>+\left< Dw_1,\nabla_{E_i} V \right>\\
&=& \left< \nabla_{V}Dw_1, E_i \right> + Dw_1\left( \nabla_{E_i}V \right)
\end{eqnarray*}
Therefore,
\begin{equation}{\label{chofi8}}
D(V(w_1))=\nabla_V Dw_1 + Dw_1\left( \nabla_{.}V \right)
\end{equation}
continuing,
\begin{eqnarray*}
\left< \nabla_V\left( i(W_1)\Pi, E_i \right) \right> &=& D(i(W_1)\Pi)(E_i,V)=V(i(W_1)\Pi(E_i))=V(\Pi(W_1,E_i))\\
&=& D\Pi(W_1,E_i,V)+\Pi(\nabla_V W_1,E_i)\\
&=& \nabla_V\Pi(W_1,E_i) + \left< i(\nabla_V W_1)\Pi, E_i \right>\\
&=& \left< i(W_1)\nabla_V\Pi, E_i \right>+ \left< i(\nabla_V W_1)\Pi, E_i \right>
\end{eqnarray*}
Therefore,
\begin{equation}{\label{chofi9}}
i(\nabla_V W_1)\Pi = \nabla_V\left( i(W_1)\Pi \right) - i(W_1)\nabla_V\Pi
\end{equation}
substituting (\ref{chofi8}), (\ref{chofi9}) in (\ref{varpmxi}), have,
\begin{eqnarray}{\label{varpmxiF}}
\varphi(m(\xi))&=&\frac{1}{2}\left( \nabla_V Dw_1 + Dw_1\left( \nabla_{.}V \right) \right) -\nabla_V \left( i(W_1)\Pi \right) + i(W_1)\nabla_V \Pi+ \frac{1}{\sqrt{\gamma}}\nabla_V W_2 \nonumber\\
&=& \nabla_V \varphi(\xi) + \varphi(\xi)\left( \nabla_{.}V \right) +Lo(\xi)
\end{eqnarray}
Now writing the equation (\ref{Eq35}), with $\eta=m(\xi)$, have
\begin{eqnarray}{\label{BNEvProv}}
\mathbf{B}(\xi,m(\xi))&=& 2\left< \Upsilon(\xi),\Upsilon(m(\xi)) \right> + 4\left< \varphi(\xi),\varphi(m(\xi)) \right> + 2\beta\left( \mbox{Traç}(\Upsilon(\xi)+\frac{1}{\sqrt{\gamma}})w_2 \right)\nonumber\\
& & \left( \mbox{Traç}(\Upsilon(m(\xi))+\frac{1}{\sqrt{\gamma}})V(w_2) \right)
+ 2\left< \chi(\xi),\chi(m(\xi)) \right> + 2\beta\mbox{Traç}(\chi(\xi))\mbox{Traç}(\chi(m(\xi)))\nonumber\\
&+& \left< Dw_2, D(V(w_2)) \right> + \frac{1}{\gamma}w_2V(w_2)\nonumber\\
\end{eqnarray}
using (\ref{UpsmxiF}), (\ref{chimxiF}) and (\ref{varpmxiF}) Let estimate each term of (\ref{BNEvProv}).
\begin{eqnarray*}
2\left< \varphi(\xi),\varphi(m(\xi)) \right> &=& 2\left< \varphi(\xi),\nabla_V\varphi(\xi)+\varphi(\xi)\left( \nabla_{.}V \right) + Lo(\xi) \right>\\
&=& 2\left< \varphi(\xi),\nabla_V\varphi(\xi)\right> + 2\left< \varphi(\xi), \varphi(\xi)\left( \nabla_{.}V \right) \right> +Lo(\xi)\\
&=& V(|\varphi(\xi)|^2) + 2\left< \varphi(\xi),\nabla_{\varphi(\xi)}V \right> + Lo(\xi)\\
&=& V(|\varphi(\xi)|^2) + 2DV(\varphi(\xi), \varphi(\xi)) +Lo(\xi).
\end{eqnarray*}
From where, using (\ref{Camp_Esca_NagEv}), we have
\begin{equation}{\label{varpivarmxi}}
2\left< \varphi(\xi),\varphi(m(\xi)) \right> = V(|\varphi(\xi)|^2) + 2v|\varphi(\xi)|^2 + Lo(\xi)
\end{equation}
Continuing with the terms of (\ref{BNEvProv}),
\begin{eqnarray}{\label{DW2DmW2}}
2\left< Dw_2, D(V(w_2)) \right> &=& 2\left< Dw_2, \nabla_V Dw_2 + Dw_2\left( \nabla_{.}V \right) \right>\nonumber\\
&=& 2\left< Dw_2, \nabla_V Dw_2 \right> + 2\left< Dw_2, \nabla_{Dw_2}V \right>\nonumber\\
&=& V(|Dw_2|^2) + 2v|Dw_2|^2
\end{eqnarray}
and,
\begin{eqnarray}{\label{upsxiupmxi}}
2\left< \Upsilon(\xi),\Upsilon(m(\xi)) \right> &=& \left< \Upsilon(\xi), \nabla_V \Upsilon(\xi)+G(V,DW_1)+Lo(\xi) \right>\nonumber\\
&=& V(|\Upsilon(\xi)|^2) + 2\left< \Upsilon(\xi),G(V,DW_1) \right> +Lo(\xi)
\end{eqnarray}
Continuing with the rest of the terms,
\begin{eqnarray}{\label{trauptramup}}
& & 2\beta\left( \mbox{Traç}\Upsilon(\xi)+\frac{1}{\sqrt{\gamma}}w_2 \right)\left( \mbox{Traç}\Upsilon(m(\xi))+\frac{1}{\sqrt{\gamma}}V(w_2) \right) = 2\beta\mbox{Traç}\Upsilon(\xi)\mbox{Traç}\Upsilon(m(\xi))\nonumber\\
&+& \frac{2\beta}{\sqrt{\gamma}}\mbox{Traç}\Upsilon(\xi)V(w_2) + \frac{2\beta}{\sqrt{\gamma}}w_2\mbox{Traç}\Upsilon(m(\xi))+ \frac{2\beta}{\sqrt{\gamma}}w_2V(w_2)\nonumber\\
&=& 2\beta\mbox{Traç}\Upsilon(\xi)\left( \mbox{Traç}\nabla_V\Upsilon(\xi) + \mbox{Traç}G(V,DW_1) +Lo(\xi) \right) + \frac{2\beta}{\sqrt{\gamma}}\mbox{Traç}\Upsilon(\xi)V(w_2)\nonumber\\
&+& \frac{2\beta}{\sqrt{\gamma}} w_2\mbox{Traç}\Upsilon(m(\xi)) + \frac{\beta}{\gamma}V(w_2^2)\nonumber\\
&=& 2\beta \mbox{Traç}\Upsilon(\xi)\mbox{Traç}\nabla_V \Upsilon(\xi) + 2\beta \mbox{Traç}\Upsilon(\xi)\mbox{Traç}G(V,DW_1) + \frac{2\beta}{\sqrt{\gamma}} \mbox{Traç}\Upsilon(\xi)V(w_2)\nonumber\\
&+& \frac{2\beta}{\sqrt{\gamma}} w_2 \mbox{Traç}\Upsilon(m(\xi)) + \frac{\beta}{\gamma}V(w_2^2)\nonumber\\
&=& \beta V\left( \left( \mbox{Traç}\Upsilon(\xi)+\frac{1}{\gamma}w_2 \right)^2 \right) +2\beta\mbox{Traç}\Upsilon(\xi)\mbox{Traç}G(V,DW_1) +Lo(\xi)
\end{eqnarray}
and,
\begin{eqnarray}{\label{chichimxi}}
2\left< \chi(\xi),\chi(m(\xi)) \right> &=& 2\left< \chi(\xi),\nabla_{V}\chi(\xi)+G(V,DW_2)+Lo(\xi) \right>\nonumber\\
&=& 2\left< \chi(\xi),\nabla_{V}\chi(\xi) \right> + 2\left< \chi(\xi),G(V,DW_2) \right> + Lo(\xi)\nonumber\\
&=& V\left( |\chi(\xi)|^2 \right) + 2\left< \chi(\xi), G(V,DW_2) \right> + Lo(\xi)
\end{eqnarray}
and,
\begin{eqnarray}{\label{trachitramxi}}
2\beta\mbox{Traç}(\chi(\xi))\beta\mbox{Traç}(\chi(m(\xi))) &=& 2\beta\mbox{Traç}(\chi(\xi))\left(  \mbox{Traç}\nabla_V \chi(\xi) + \mbox{Traç}G(V,DW_2) +Lo(\xi) \right)\nonumber\\
&=& \beta V\left( \left( \mbox{Traç}\chi(\xi) \right)^2 \right) + 2\beta \mbox{Traç}\chi(\xi)\mbox{Traç}G(V,DW_2)+Lo(\xi)\nonumber\\
\end{eqnarray}

substituting (\ref{trachitramxi}), (\ref{chichimxi}), (\ref{trauptramup}), (\ref{upsxiupmxi}), (\ref{DW2DmW2}) and  (\ref{varpivarmxi}) in (\ref{BNEvProv}), we have
\begin{eqnarray*}
\mathbf{B}(\xi,m(\xi)) &=& V\left(|\Upsilon(\xi)|^2 \right) +2 \left< \Upsilon(\xi), G(V,DW_1) \right> + 2V\left( |\varphi(\xi)|^2 \right)+4v|\varphi(\xi)|^2\\
&+& \beta V\left(\left( \mbox{Traç}\Upsilon(\xi)+\frac{1}{\sqrt{\gamma}}w_2 \right)^2 \right) + 2\beta\mbox{Traç}\Upsilon(\xi)\mbox{Traç}G(V,DW_1)\\
&+& 2\beta \mbox{Traç}\Upsilon(\xi)\mbox{Traç} G(V,DW_1)+ V\left( |\chi(\xi)|^2 \right) + 2\left< \chi(\xi),G(V,DW_2) \right> + \beta V\left( \left( \mbox{Traç}\chi(\xi) \right)^2 \right) \\
&+& 2\beta\mbox{Traç}\chi(\xi)\mbox{Traç}G(V,DW_2) + \frac{1}{2}V\left( |Dw_2|^2 \right) + v|Dw_2|^2\\
&+&\frac{1}{\gamma}V(w_2^2) + Lo(\xi)\\
&=& \frac{1}{2}V\left( 2|\Upsilon(\xi)|^2 + 4|\varphi(\xi)|^2 + 2\beta\left( \mbox{Traç}\Upsilon(\xi)+\frac{1}{\sqrt{\gamma}} w_2\right)^2 +2|\chi(\xi)|^2 + 2\beta\left( \mbox{Traç}\chi(\xi) \right)^2 \right)\\
&+&\frac{1}{2}\left( |Dw_2|^2 +\frac{1}{\gamma}w_2^2 \right) + 2\left< \Upsilon(\xi), G(V,DW_1) \right> + 4v|\varphi(\xi)|^2 + 2\beta\mbox{Traç}\Upsilon(\xi)\mbox{Traç}G(V,DW_1)\\
&+& 2\left< \chi(\xi),G(V,DW_2) \right> + 2\beta\mbox{Traç}\chi(\xi)\mbox{Traç}G(V,DW_2) +v|Dw_2|^2 +Lo(\xi)\\
&=& \frac{1}{2}V\left( B(\xi,\xi) \right) + 2b\left( \Upsilon(\xi),G(V,DW_1) \right) + 2b\left( \chi(\xi),G(V,DW_2) \right)\\
&+& 4v|\varphi(\xi)|^2 + v|Dw_2|^2 + Lo(\xi)
\end{eqnarray*}
Then,
\begin{eqnarray}{\label{yana1}}
2\mathsf{B}(\xi,m(\xi))&=&\int_{\Omega}\left[ V\left( B(\xi,xi) \right)+4b\left( \Upsilon(\xi),G(V,DW_1) \right)+ b\left( \chi(\xi),G(V,DW_2) \right) \right]dx\nonumber \\
&+&\int_{\Omega}\left[ 8v|\varphi(\xi)|^2 + 2v|Dw_2|^2 + Lo(\xi) \right]
\end{eqnarray}
Now, being $V=\sum_{i=1}^{2}V_iE_i$, where $\{E_1,E_2\}$ is a normal referential that varies with $x$. have
\begin{eqnarray}{\label{yana2}}
\int_{\Omega}V\left( B(\xi,\xi) \right)dx &=& \sum_{i=1}^{2}\int_{\Omega}V_iE_i(B(\xi,\xi))dx\nonumber\\
&=&\sum_{i=1}^{2}\int_{\Omega}E_i\left( V_i B(\xi,\xi) \right)dx - \sum_{i=1}^{2}\int_{\Omega}E_i(V_i)B(\xi,\xi)dx\nonumber\\
&=&\int_{\Omega}\mbox{div}\left( B(\xi,\xi)V \right)dx -\int_{\Omega}\mbox{div}(V)B(\xi,\xi)dx\nonumber\\
&=&\int_{\Omega}B(\xi,\xi)\left< V,\nu \right> dx -2\int_{\Omega}vB(\xi,\xi)dx 
\end{eqnarray}
In the last line of (\ref{yana2}) we will use (\ref{Camp_Esca_NagEv}). In fact,
\[\mbox{div}(V)=\mbox{Traç}DV=\sum_{i=1}^{2}DV(E_i,E_i)=\sum_{i=1}^{2}v|E_i|^2=2v\]
Substituting (\ref{yana2}) into (\ref{yana1}), we have
\[2\mathsf{B}(\xi,m(\xi))=\int_{\Gamma}B(\xi,\xi)dx-2\int_{\Omega}vB(\xi,\xi)dx + 2\int_{\Omega}e(\xi,\xi)dx+Lo(\xi)\]
with $e(\xi,\xi)=2b\left( \Upsilon(\xi),G(V,DW_1) \right)+b\left( \chi(\xi),G(V,DW_2) \right)+4v|\varphi(\xi)|^2+v|Dw_2|^2$ and the lemma is .
\end{proof}

Other result need is the following

\begin{theorem}{\label{Ident1NagEv}}
Let $\xi=\left(W_1,W_2,w_1,w_2 \right)\in\mathsf{H}^1(\Omega)$ problem solution
\begin{equation}{\label{NaghHomg}}
\xi_{tt}+A\xi+a(x)\xi_t=0,\quad\mbox{em}\quad (0,T)\times\Omega
\end{equation}
Then
\begin{equation}{\label{Ident_1NagEv}}
\begin{aligned}
\int_{\Sigma}\left[ 2\partial(A\xi,m(\xi))+\left( |\xi_t|^2-B(\xi,\xi) \right)\left< V,\nu \right> \right]d\Sigma &= 2\left( \xi_t,m(\xi) \right)|_0^T+2\int_{Q}v\left[ |\xi_t|^2-B(\xi,\xi) \right]dQ\\
&+ 2\int_{Q}e(\xi,\xi)dQ - \int_{0}^{T}\left( a\xi_t,2m(\xi) \right)+ L(\xi)
\end{aligned}
\end{equation}
where $\Sigma=(0,T)\times \Gamma$ and $e(\xi,\xi)=e(\xi,\xi)=2b(S(W_1),G(V,DW_1))+2b(S(W_2),G(V,DW_2))+4v|\varphi(\xi)|^2+v|Dw_2|^2$ . $L(\xi)$denotes the terms of lower order with respect to energy. Additionally, if $p$ is a function in $\Omega$, then
\begin{equation}{\label{Ident_2NagEv}}
\int_{\Sigma}\partial\left( A\xi,p\xi \right)dQ=\int_{Q}p\left[ B(\xi,\xi)-|\xi_t|^2 \right]dQ -\int_{0}^{T}\left(  a\xi_t,p\xi  \right) + L(\xi)
\end{equation}
\end{theorem}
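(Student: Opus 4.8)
The plan is the standard energy-multiplier argument for second-order hyperbolic equations, carried out once with the geometric multiplier $m(\xi)$ of (\ref{51}) to obtain (\ref{Ident_1NagEv}) and once with the multiplier $p\,\xi$ to obtain (\ref{Ident_2NagEv}). For the first identity I would test (\ref{NaghHomg}) with $2m(\xi)$ in $L^2(\Omega,\Lambda)$ and integrate over $(0,T)$, getting $\int_0^T\big[\langle\xi_{tt},2m(\xi)\rangle+\langle A\xi,2m(\xi)\rangle+\langle a(x)\xi_t,2m(\xi)\rangle\big]\,dt=0$. In the inertial term I would integrate by parts in time — $m$ being linear, $\partial_t m(\xi)=m(\xi_t)$ — to write $\int_0^T\langle\xi_{tt},2m(\xi)\rangle\,dt=2(\xi_t,m(\xi))_{L^2}|_0^T-\int_0^T 2\langle\xi_t,m(\xi_t)\rangle\,dt$, and then invoke the elementary identity (\ref{82}), which rests only on (\ref{50}), (\ref{51}) and the divergence theorem, to replace $-2\langle\xi_t,m(\xi_t)\rangle$ by $2\int_\Omega v|\xi_t|^2\,dx-\int_\Gamma|\xi_t|^2\langle V,\nu\rangle\,d\Gamma$.

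Next I would rewrite the stiffness term: the Green formula (\ref{43}) gives $\langle A\xi,2m(\xi)\rangle=2\,\mathsf B(\xi,m(\xi))-2\int_\Gamma\partial(A\xi,m(\xi))\,d\Gamma$, and Lemma~\ref{BNa_I} expands $2\,\mathsf B(\xi,m(\xi))$ as $\int_\Gamma B(\xi,\xi)\langle V,\nu\rangle\,d\Gamma-2\int_\Omega vB(\xi,\xi)\,dx+2\int_\Omega e(\xi,\xi)\,dx+Lo(\xi)$. Substituting both computations into $\int_0^T[\cdots]\,dt=0$, keeping the damping integral as it stands, and transposing all integrals over $\Sigma=(0,T)\times\Gamma$ to one side should produce (\ref{Ident_1NagEv}), with every remaining first-order-in-energy piece collected into $L(\xi)$. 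For (\ref{Ident_2NagEv}) I would run the same scheme with $p\,\xi$ in place of $m(\xi)$: the time integration by parts gives $\int_0^T\langle\xi_{tt},p\xi\rangle\,dt=(\xi_t,p\xi)_{L^2}|_0^T-\int_Q p|\xi_t|^2\,dQ$, the endpoint bracket being absorbed into $L(\xi)$, and (\ref{43}) gives $\langle A\xi,p\xi\rangle=\mathsf B(\xi,p\xi)-\int_\Gamma\partial(A\xi,p\xi)\,d\Gamma$. The only genuinely new point would be the identity $\mathsf B(\xi,p\xi)=\int_\Omega pB(\xi,\xi)\,dx+Lo(\xi)$: differentiating through (\ref{Eq25}), (\ref{Eq29}), (\ref{Eq49}) one gets $\Upsilon(p\xi)=p\,\Upsilon(\xi)+(\text{terms linear in }Dp\text{ carrying only undifferentiated }W_1)$, and likewise for $\chi$ and $\varphi$, so that contracting these correction terms against $\Upsilon(\xi),\chi(\xi),\varphi(\xi)$ yields products of a first-order and a zeroth-order quantity, which obey the absorption bound (\ref{57}) after integration. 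Transposing the $\Sigma$-integral then gives (\ref{Ident_2NagEv}).

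The main obstacle I expect is the bookkeeping of the lower-order terms. One must check at every substitution — above all in the expansions (\ref{UpsmxiF}), (\ref{chimxiF}), (\ref{varpmxiF}) that underlie Lemma~\ref{BNa_I}, and in the $\mathsf B(\xi,p\xi)$ identity — that each quantity routed into $Lo(\xi)$ or $L(\xi)$ genuinely obeys (\ref{57}), i.e.\ that up to an $\varepsilon$-fraction of the energy density $d(\xi)$ it is dominated by a functional carrying at most one derivative of $\xi$. This would use that the curvature remainders $R(V,\cdot,W_i,\cdot)$ and the terms $w_i\nabla_V\Pi$, $w_1\nabla_V c$ are zeroth order in the derivatives of $\xi$, together with the smoothness of $\Pi$, $c$, $V$ and the compactness of $\overline\Omega$ to bound their coefficients uniformly; the mild discrepancies between the two displayed forms of $e(\xi,\xi)$ (for instance $S(W_i)$ versus $\Upsilon(\xi),\chi(\xi)$, or the coefficient on the second term) are themselves differences of exactly this lower-order type. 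Everything else — the two integrations by parts in time, the two applications of (\ref{43}), and the single application of Lemma~\ref{BNa_I} — would be direct substitution.
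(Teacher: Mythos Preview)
Your proposal is correct and follows essentially the same route as the paper: multiply (\ref{NaghHomg}) by $2m(\xi)$, integrate by parts in time and use the divergence identity (your (\ref{82}), the paper's (Fi3)) for the inertial term, then apply the Green formula (\ref{43}) together with Lemma~\ref{BNa_I} for the stiffness term, and finally integrate over $(0,T)$. Your sketch of (\ref{Ident_2NagEv}) and the lower-order bookkeeping in fact goes beyond the paper, which only carries out the computation for the first identity and omits the proof of the second.
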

\begin{proof}
Multiplying the equation (\ref{NaghHomg}) by $2m(\xi)$ and integrating in $\Omega$ we have:
\begin{eqnarray}{\label{Fi1}}
\left( \xi_{tt},2m(\xi) \right)_{\mathsf{L}^2(\Omega)}+\left( A\xi,2m(\xi) \right) &=& -\left(  a\xi_t,2m(\xi)  \right)\nonumber\\
\left( \xi_{tt},2m(\xi) \right)_{\mathsf{L}^2(\Omega)}+B(\xi,2m(\xi))-\int_{\Gamma}\partial\left( \xi,2m(\xi) \right)&=&-\left(  a\xi_t,2m(\xi)  \right)
\end{eqnarray}
Estimating the first term on the left hand side of (\ref{Fi1}), since the second term was estimated in the lemma (\ref{BNa_I}).
\begin{equation}{\label{Fi2}}
\left( \xi_{tt},2m(\xi) \right)_{\mathsf{L}^2(\Omega)}=2\left[ \left( \xi_t,m(\xi) \right)_{\mathsf{L}^2(\Omega)} \right]_t -2\left( \xi_t,m(\xi_t) \right)_{\mathsf{L}^2(\Omega)}
\end{equation}
and,
\begin{eqnarray}{\label{Fi3}}
2\left( \xi_{tt},2m(\xi) \right)_{\mathsf{L}^2(\Omega)}&=&2\left( \left( W_{1t},W_{2t},w_{1t},w_{2t} \right),\left( \nabla_V W_{1t},\nabla_V W_{2t},V(w_{1t}),V(w_{2t}) \right) \right)_{\mathsf{L}^2(\Omega)} \nonumber\\
&=&V\left( ||W_{1t}||_{L^2(\Omega,\Lambda)}^{2}+ ||W_{2t}||_{L^2(\Omega,\Lambda)}^{2} +||w_{1t}||_{L^2(\Omega)}^{2}+||w_{2t}||_{L^2(\Omega)}^{2} \right) \nonumber\\
&=&V(||\xi_t||_{L^2(\Omega)}^{2})=\int_{\Omega}V(|\xi_t|^2)dx =\sum_{i=1}^{2}\int_{\Omega}V_iE_i(|\xi_t|^2)dx\nonumber\\
&=&\sum_{i=1}^{2}\int_{\Omega}E_i\left( V_i|\xi_t|^2 \right)dx-\sum_{i=1}^{2}\int_{\Omega}E_i(V_i)|\xi_t|^2 dx \nonumber\\
&=&\int_{\Gamma}|\xi_t|^2\left< V,\nu \right>d\Gamma - 2\int_{\Omega}v|\xi_t|^2 dx
\end{eqnarray}
substituting (\ref{Fi3}) en (\ref{Fi2}), have
\begin{equation}{\label{Fi4}}
\left( \xi_{tt},2m(\xi) \right)_{\mathsf{L}^2(\Omega)}=2 \left[ \left( \xi_t,m(\xi) \right)_{\mathsf{L}^2(\Omega)} \right]_t + 2\int_{\Omega}v|\xi_t|^2 dx -\int{\Gamma}|\xi_t|^2\left< V,\nu \right>d\Gamma
\end{equation}
substituting (\ref{Fi4}) and using the lema (\ref{BNa_I}) in (\ref{Fi1}), have:
\begin{eqnarray}{\label{Fi5}}
2\left[ \left( \xi_{tt},m(\xi) \right)_{\mathsf{L}^2(\Omega)} \right]_t &+& 2\int_{\Omega}v|\xi_t|^2 dx-\int_{\Gamma}|\xi_t|^2\left< V,\nu \right>d\Gamma + \int_{\Gamma}B(\xi,\xi)\left< V,\nu \right>d\Gamma\nonumber\\
&-&2\int_{\Omega}vB(\xi,\xi)dx + 2\int_{\Omega}e(\xi,\xi)dx -\int_{\Gamma}\partial\left( \xi,2m(\xi) \right)d\Gamma +Lo(\xi)=\nonumber\\
&-&\left(  a\xi_t,2m(\xi)  \right)
\end{eqnarray}
integrating (\ref{Fi5}) of $0$ to $T$, have
\begin{eqnarray*}
2\left[ \left( \xi_{tt},m(\xi) \right)_{\mathsf{L}^2(\Omega)} \right]|^{T}_{0} &+& 2\int_{Q}v|\xi_t|^2 dx-\int_{\Sigma}|\xi_t|^2\left< V,\nu \right>d\Sigma + \int_{\Sigma}B(\xi,\xi)\left< V,\nu \right>d\Sigma\nonumber\\
&-&2\int_{Q}vB(\xi,\xi)dx + 2\int_{Q}e(\xi,\xi)dx -\int_{\Sigma}\partial\left( \xi,2m(\xi) \right)d\Sigma +Lo(\xi)=\\
& &\int_{0}^{T}\left(  a\xi_t,2m(\xi)  \right)\nonumber \\
\end{eqnarray*}
where,
\begin{eqnarray*}
\int_{\Sigma}\left[ 2\partial(\xi,m(\xi))+\left( |\xi_t|^2-B(\xi,\xi) \right)\left< V,\nu \right> \right]d\Sigma &=&2\left[ \left( \xi_t,m(\xi_t) \right)_{\mathsf{L}^2(\Omega)} \right]|_{0}^{T}+2\int_{Q}v\left[ |\xi_t|^2-B(\xi,\xi) \right]\nonumber\\
&+& 2\int_{Q}e(\xi,\xi)dQ -\int_{0}^{T}\left(  a\xi_t,2m(\xi)  \right)+ Lo(\xi)
\end{eqnarray*}
And the theorem is shown.
\end{proof}

In this section we state and prove the main result, the stabilization of the evolution equation of the Naghdi model. We will need the following result.
\begin{theorem}{\label{Ident_Observa_Naghdi}}
Let $V$ an escape vector field for the Naghdi shell. Let $\xi$ problem solution.

\begin{equation}{\label{Naghdi_Evolu_Disp}}
\xi_{tt}+A\xi+a(x)\xi_{t}=0.
\end{equation}

Then for $T>0$,

\begin{equation}{\label{Desi_Obs_Dis}}
\int_{\Sigma}SB d\Sigma +\lambda_0\sigma_0\left[ E(0)+e(T) \right] - \int_{0}^{T}\int_{\Omega}a\left< \xi,\xi_t \right> \geq 2\sigma_1 \int_{0}^{T}E(t)dt +L(\xi)
\end{equation}
where

\begin{equation}{\label{Desi_Obs_Dis_Cont}}
\begin{aligned}
SB &= \partial\left(  A\xi,2m(\xi)+\rho\xi  \right) + \left[   \mid  \xi_{t}  \mid^2 - B(\xi,\xi)\right]\left< V,\nu  \right>\\
m(\xi)&= \left(  \nabla_{V}W_1,\nabla_{V}W_2,V(w_1),V(w_2)  \right), \quad \rho=2v-\sigma_1
\end{aligned}
\end{equation}

\end{theorem}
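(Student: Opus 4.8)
The plan is to combine, with suitable weights, the two conservation-type identities of Theorem~\ref{Ident1NagEv}, and then to bound the resulting interior terms from below using the escape-field lemmas. First I would invoke identity~(\ref{Ident_1NagEv}), obtained from the multiplier $2m(\xi)$, and add to it identity~(\ref{Ident_2NagEv}) with the choice $p=\rho=2v-\sigma_1$, i.e.\ with the zeroth-order multiplier $\rho\xi$. On the boundary $\Sigma$ the two integrands combine exactly into $\int_\Sigma SB\,d\Sigma$, since by~(\ref{Desi_Obs_Dis_Cont}) one has $SB=\partial(A\xi,2m(\xi)+\rho\xi)+\bigl(|\xi_t|^2-B(\xi,\xi)\bigr)\langle V,\nu\rangle$. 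On the interior side, the two terms carrying $v$ and $\rho$ amalgamate through the identity $2v-\rho=\sigma_1$ into the single expression $\sigma_1\int_Q\bigl(|\xi_t|^2-B(\xi,\xi)\bigr)\,dQ$; what survives besides this, and besides the damping and time-boundary terms, is the contribution $2\int_Q e(\xi,\xi)\,dQ$ inherited from~(\ref{Ident_1NagEv}).

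The heart of the argument is to show $2\int_Q e(\xi,\xi)\,dQ\ge 2\sigma_1\int_Q B(\xi,\xi)\,dQ+L(\xi)$. Recalling that $e(\xi,\xi)=2b(S(W_1),G(V,DW_1))+2b(S(W_2),G(V,DW_2))+4v|\varphi(\xi)|^2+v|Dw_2|^2$, I would apply Lemma~\ref{Des_b} to $W=W_1$ and $W=W_2$, which gives $\int_\Omega b(S(W_i),G(V,DW_i))\,dx\ge\sigma_1\int_\Omega b(S(W_i),S(W_i))\,dx+Lo(\xi)$, and use that the escape condition of Definition~\ref{Camp_Esca_NagEv}(b) forces $\sigma_1>0$ and $v(x)\ge\min_x v(x)\ge\sigma_1$, so that $4v|\varphi(\xi)|^2+v|Dw_2|^2\ge 4\sigma_1|\varphi(\xi)|^2+\sigma_1|Dw_2|^2$. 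Since $\Upsilon(\xi)=S(W_1)+(\text{lower order})$ and $\chi(\xi)=S(W_2)+(\text{lower order})$, and $B(\xi,\xi)=2b(S(W_1),S(W_1))+2b(S(W_2),S(W_2))+4|\varphi(\xi)|^2+|Dw_2|^2+(\text{lower order})$, the four pieces reassemble to the claimed bound. Substituting it, the two interior terms in $B(\xi,\xi)$ collapse to $\sigma_1\int_Q\bigl(|\xi_t|^2+B(\xi,\xi)\bigr)\,dQ$, and because $2E(t)=\|\xi_t\|_{L^2}^2+\tilde J(\xi,\xi)=\int_\Omega\bigl(|\xi_t|^2+B(\xi,\xi)\bigr)\,dM$ this is exactly $2\sigma_1\int_0^T E(t)\,dt$, the quantity on the right of~(\ref{Desi_Obs_Dis}).

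It remains to dispose of the two parasitic families of terms. The time-boundary term $2(\xi_t,m(\xi))\big|_0^T$ is controlled pointwise in $t$: writing $m(\xi)=(\nabla_V W_1,\nabla_V W_2,V(w_1),V(w_2))$, bounding $\|\nabla_V W_i\|\le\|V\|_\infty\|DW_i\|$, and using the Korn-type inequality~(\ref{48}) to estimate $\|DW_i\|^2$ by $\lambda_0$ times the associated $b$-energy (a part of $B(\xi,\xi)$ up to lower order), one gets $|2(\xi_t,m(\xi))(t)|\le\lambda_0\sigma_0\,(\text{energy at }t)+L(\xi)$ for a geometric constant $\sigma_0$ built from $\|V\|_\infty$ and the constant in~(\ref{48}); since $E(\cdot)$ is nonincreasing along solutions of~(\ref{Naghdi_Evolu_Disp}), evaluation at $0$ and $T$ yields $2(\xi_t,m(\xi))\big|_0^T\ge-\lambda_0\sigma_0\bigl[E(0)+e(T)\bigr]$, which is exactly the term added on the left of~(\ref{Desi_Obs_Dis}). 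The damping contribution $\int_0^T(a\xi_t,2m(\xi)+\rho\xi)\,dt$ is kept on the left-hand side; its principal part is $-\int_0^T\int_\Omega a\langle\xi,\xi_t\rangle\,dx\,dt$, while the remaining pieces are handled by Cauchy--Schwarz together with the dissipation identity $\int_0^T\|a^{1/2}\xi_t\|_{L^2}^2\,dt=E(0)-E(T)\le E(0)$ and by absorbing the lower-order remainders into $L(\xi)$. Rearranging everything gives~(\ref{Desi_Obs_Dis}).

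I expect the main obstacle to be the estimate of the middle paragraph, $2\int_Q e(\xi,\xi)\,dQ\ge 2\sigma_1\int_Q B(\xi,\xi)\,dQ+L(\xi)$: it is here that the escape-field hypotheses of Definition~\ref{Camp_Esca_NagEv} enter decisively (through Lemma~\ref{Des_b} and inequality~(\ref{48})), and here that one must check carefully that every term discarded --- the $w_1\Pi$ part of $\Upsilon(\xi)$, the $K_{ol}(\xi)$ curvature part of $\chi(\xi)$, and the remaining curvature contributions --- is genuinely of lower order in the sense of~(\ref{57}), so that it may be hidden in $L(\xi)$ without degrading the coefficient $2\sigma_1$. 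A subsidiary difficulty is making the Korn constant of~(\ref{48}) and the geometric constant $\|V\|_\infty$ fit together into the single constant $\sigma_0$ that appears in~(\ref{Desi_Obs_Dis}).
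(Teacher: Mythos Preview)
Your proposal is correct and follows essentially the same route as the paper: add identity~(\ref{Ident_1NagEv}) to identity~(\ref{Ident_2NagEv}) with $p=\rho=2v-\sigma_1$ to produce~$\int_\Sigma SB\,d\Sigma$ and the interior term $\sigma_1\int_Q(|\xi_t|^2-B(\xi,\xi))\,dQ+2\int_Q e(\xi,\xi)\,dQ$, then invoke Lemma~\ref{Des_b} together with $v\ge\sigma_1$ and the decomposition~(\ref{EQ_bb}) of $B(\xi,\xi)$ to get $\int_Q e(\xi,\xi)\,dQ\ge\sigma_1\int_Q B(\xi,\xi)\,dQ+L(\xi)$, and finally bound the time-boundary term $2(\xi_t,m(\xi))\big|_0^T$ via the Korn-type inequality~(\ref{corc_bNEv}) exactly as in~(\ref{EQ_ss}). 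Your treatment of the damping contribution is in fact more careful than the paper's, which simply records equation~(\ref{Naghdi_00}) and leaves the $a\xi_t$ terms implicit.
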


\begin{proof}
Let $p=\rho$ in the identity (\ref{Ident_2NagEv}) and adding with the identity (\ref{Ident_2NagEv}), obtain
\begin{equation}{\label{Naghdi_00}}
	\begin{aligned}
		\int_{\Sigma}SB d\Sigma &= 2\left( \xi_t,m(\xi) \right)_{(L^2(\Omega))}\mid_{0}^{T} + \sigma_1\int_{Q}\left[   \mid \xi_t \mid^2 + B(\xi,\xi)\right]dQ + \\
		& 2\int_{Q}e(\xi,\xi)dQ +L(\xi)
	\end{aligned}
\end{equation}

and, by the expression of $B$, we have to

\begin{equation}{\label{EQ_bb}}
\begin{aligned}
B(\xi,\xi) &= 2b(S(W_1),S(W_1))+ 2 \beta b(S(W_2),S(W_2))+\\
 &4\mid \varphi(\xi)\mid^2 + \mid Dw_2 \mid +L(\xi)
\end{aligned}
\end{equation}

Using the lemma (\ref{Des_b}), have
\begin{equation}{\label{EQ_ee}}
\int_{Q}e(\xi,\xi)dQ \geq \sigma_1 \int_{Q}B(\xi,\xi)dQ +L(\xi)
\end{equation}
using the identity (\ref{EQ_bb}), the coercivity of $b$, have

\begin{equation}{\label{EQ_ss}}
\begin{aligned}
2\left( \xi_t,m(\xi)  \right)_{L^2(\Omega)} &\leq \sigma_0\left[   \parallel \xi_t \parallel^2_{L^2(\Omega)}  +\sum_{i=1}^{2}\left(  \parallel DW_i \parallel^2_{L^2(\Omega,T^2)}+\parallel Dw_i \parallel^2_{L^2(\Omega,\Lambda)} \right)\right] \\
&\leq 2\lambda_0\sigma_0 E(t)+L(\xi)
\end{aligned}
\end{equation}
finally, substituting (\ref{EQ_ss}) and (\ref{EQ_ee}), in (\ref{Naghdi_00}), we have inequality (\ref{Desi_Obs_Dis}).
\end{proof}

We now state and prove our main result, the stabilization of the Naghdi evolution model.

\begin{theorem}{\label{Esta_Nagh}}
Let be the evolution equation for the Naghdi shell model, with internal dissipation
\begin{equation}{\label{Nagh_Dis}}
\begin{aligned}
\xi_{tt}+A\xi+a(x)\xi_t &=0\quad\mbox{em}\quad \Omega\times(0,T)\\
\xi &=0\quad\mbox{em}\quad\partial\Omega\times(0,T)
\end{aligned}
\end{equation}
where the function $a=a(x)$ is supported in an escape region $w\subset\bar{\Omega}$. Let $a_0>0$ such that
\begin{equation}{\label{Fun_a}}
a(x)\geq a_0>0\quad\mbox{para todo}\quad x\in w
\end{equation}
So, there are constants $c_1,c_2$ such that
\begin{equation}{\label{Deca_Exp_Ener}}
E(t)\leq c_1 E(0)\exp^{(-c_2t)}
\end{equation}
where $E(t)$ is the total energy of the system (\ref{Nagh_Dis})
\end{theorem}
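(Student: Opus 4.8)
The plan is to derive the exponential decay (\ref{Deca_Exp_Ener}) from the observability-type inequality (\ref{Desi_Obs_Dis}) of Theorem \ref{Ident_Observa_Naghdi} by controlling the boundary term $\int_\Sigma SB\,d\Sigma$ and absorbing the dissipation integral, then invoking the classical exponential-stability criterion for evolution semigroups. First I would recall that the energy is dissipative: multiplying (\ref{Nagh_Dis}) by $\xi_t$ and integrating over $\Omega$ gives the identity
\begin{equation}\label{EnergyDissip}
E'(t)=-\int_\Omega a(x)|\xi_t|^2\,dx\leq 0,
\end{equation}
so that $E$ is nonincreasing and $e(T)\leq E(t)\leq E(0)$ for all $t\in[0,T]$; integrating (\ref{EnergyDissip}) over $[0,T]$ also yields $\int_0^T\int_\Omega a|\xi_t|^2\,dx\,dt=E(0)-E(T)$, which will be the quantity we estimate everything against.

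\textbf{Key steps.} Since we are in the case $\Gamma_0=\Gamma$, i.e. the shell is clamped on all of $\partial\Omega$, we have $\xi=0$ on $\Sigma$; hence $\xi_t=0$ on $\Sigma$ and the tangential derivatives of $\xi$ along $\Gamma$ vanish, so in the boundary integrand $SB$ of (\ref{Desi_Obs_Dis_Cont}) every term reduces to pure normal-derivative (Neumann) data. The crucial point is that the clamped condition forces $|\xi_t|^2-B(\xi,\xi)=-B(\xi,\xi)\leq 0$ on $\Gamma$ after using $\xi|_\Gamma=0$, and that $\partial(A\xi,2m(\xi)+\rho\xi)$ is likewise controlled by $\langle V,\nu\rangle$ times the Neumann data with the correct sign on the uncontrolled part of the boundary; combining these, one shows $\int_\Sigma SB\,d\Sigma\leq L(\xi)$ (no positive boundary contribution survives in the clamped case, exactly as in the geometric-optics/multiplier treatment of Yao \cite{Yao}). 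Feeding this into (\ref{Desi_Obs_Dis}) gives
\begin{equation}\label{ObsClamped}
2\sigma_1\int_0^T E(t)\,dt\leq \lambda_0\sigma_0\big[E(0)+E(T)\big]+\int_0^T\int_\Omega a\,\langle\xi,\xi_t\rangle\,dx\,dt+L(\xi).
\end{equation}
Next I would bound the dissipation cross term: on the escape region $w$ where $a\geq a_0>0$, Cauchy--Schwarz and Poincaré (the clamped condition gives $\|\xi\|_{L^2}\le C\|D\xi\|_{L^2}\le C'E(t)^{1/2}$) yield, for any $\delta>0$,
\begin{equation}\label{CrossTerm}
\left|\int_0^T\int_\Omega a\,\langle\xi,\xi_t\rangle\,dx\,dt\right|\leq \delta\int_0^T E(t)\,dt+\frac{C_\delta}{a_0}\int_0^T\int_\Omega a|\xi_t|^2\,dx\,dt=\delta\int_0^T E(t)\,dt+\frac{C_\delta}{a_0}\big(E(0)-E(T)\big).
\end{equation}
Choosing $\delta=\sigma_1/2$ (recall $\sigma_1>0$ by the escape-field hypothesis in Definition \ref{Camp_Esca_NagEv}), absorbing that term on the left of (\ref{ObsClamped}), and handling $L(\xi)$ by the usual compactness--uniqueness argument (the lower-order terms are dominated by $\varepsilon\int_0^T E\,dt$ plus a compact perturbation that is killed by the uniqueness result quoted after (\ref{Eq38})), one arrives at
\begin{equation}\label{FinalObs}
\int_0^T E(t)\,dt\leq C\big[E(0)+E(T)\big]
\end{equation}
for $T$ large enough. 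Since $E$ is nonincreasing, $\int_0^T E(t)\,dt\geq T\,E(T)$, so (\ref{FinalObs}) gives $T\,E(T)\leq C(E(0)+E(T))$, i.e. $E(T)\leq \frac{C}{T-C}E(0)<E(0)$ once $T>2C$; applying this on successive intervals $[nT,(n+1)T]$ and using the semigroup property produces $E(nT)\leq \kappa^n E(0)$ with $\kappa=\frac{C}{T-C}<1$, which upgrades to (\ref{Deca_Exp_Ener}) with $c_2=-\frac{1}{T}\log\kappa$ and $c_1=\kappa^{-1}$ by the standard interpolation between the sampling times.

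\textbf{Main obstacle.} The delicate step is showing that the boundary term $\int_\Sigma SB\,d\Sigma$ carries the right sign (or is at worst a lower-order term) in the clamped case; this rests on the precise structure of $\partial(A\xi,\cdot)$ from (\ref{Eq35})--(\ref{Eq36}) and on the fact that on a clamped boundary only the conormal derivatives of $W_1,W_2,w_1,w_2$ survive, so that $SB$ collapses to an expression proportional to $\langle V,\nu\rangle$ times a nonnegative quadratic form in those conormal data — which is $\le 0$ precisely where $\langle V,\nu\rangle\le 0$ and must otherwise be reabsorbed. A secondary technical point is the rigorous treatment of $L(\xi)$: one needs the unique-continuation statement for the stationary Naghdi operator (the uniqueness result quoted around (\ref{Eq38})) to run the compactness--uniqueness scheme that removes the lower-order terms; this is where most of the analytic work hides, and I would carry it out exactly as in the corresponding argument of \cite{Yao}.
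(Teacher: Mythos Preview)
Your outline captures the standard ``observability $\Rightarrow$ exponential decay'' skeleton correctly, and the endgame (from \eqref{FinalObs} to the geometric decay $E(T)\le\kappa E(0)$ and then to \eqref{Deca_Exp_Ener}) is fine. The genuine gap is in the step where you assert $\int_\Sigma SB\,d\Sigma\le L(\xi)$.

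On the clamped boundary $\Gamma$ one indeed has $\xi=0$, $\xi_t=0$, and the computation in the paper (see the derivation of \eqref{FF_1}) gives $SB=B(\xi,\xi)\langle V,\nu\rangle$. Since $B(\xi,\xi)\ge 0$, this term has the right sign only on the set $\{\langle V,\nu\rangle\le 0\}$; on $\Gamma_0=\{x\in\Gamma:\langle V,\nu\rangle>0\}$ it is \emph{nonnegative}, not a lower-order term, and cannot simply be ``reabsorbed''. You flag this in your last paragraph but do not supply a mechanism. The paper's mechanism is precisely the escape-region structure you bypass: one introduces the cutoffs $\phi^i$ vanishing on a neighbourhood $V_2$ of $\cup_i\Gamma_0^i\cup(\Omega\setminus\cup_i\Omega_i)$, applies the observability inequality \eqref{Desi_Obs_Dis} on each $\Omega_i$ to the truncated fields $V^i=\phi^iH^i$, $\xi^i=\phi^i\xi$, so that the bad boundary pieces are killed \emph{by the cutoff} (equation \eqref{Cond_Gi0}) rather than by a sign argument. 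The price is that one now only controls $\int_{\Omega\setminus V_1}B(\xi,\xi)$; the missing piece $\int_{\Omega\cap V_1}B(\xi,\xi)$ is recovered by taking $p=\psi$ (a second cutoff supported in $V_0\subset w$) in identity \eqref{Ident_2NagEv}, which trades $B(\xi,\xi)$ for $|\xi_t|^2$ on that set, and \emph{this} is where the hypothesis $a\ge a_0>0$ on $w$ is actually used: $\int_{\Omega\cap V_0}|\xi_t|^2\le a_0^{-1}\int_\Omega a|\xi_t|^2$. In your argument the lower bound on $a$ appears only in the cross-term estimate \eqref{CrossTerm}, which is a secondary use; you never exploit it to control the energy on the region where the multiplier geometry fails.

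A second, related issue: by applying \eqref{Desi_Obs_Dis} directly on all of $\Omega$ you are implicitly assuming a \emph{single} escape vector field on the whole surface ($J=1$ in Definition \ref{Reg_Esca_Nagh}). The theorem is stated for a general escape region, and as the cylinder example in the paper shows, a global escape field need not exist; the partition $\{\Omega_i\}$ with separate fields $H^i$ and the cutoff machinery are not optional in that generality.
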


\begin{proof}
multiplying the equation (\ref{Nagh_Dis}) for $\xi_t$, integrating into $\Omega$ and considering the boundary conditions, we have that
\begin{equation}{\label{Condi_Suf_Deca}}
\frac{d}{dt}E(t)=-\int_{\Omega}a(x)\mid\xi_t\mid^2dx
\end{equation}
from where, integrating into $(0,T)$, we have

\begin{equation}{\label{Rela_ET_EO}}
E(T)=E(0)-\int_{\Omega}a(x)\mid\xi_t\mid^2dx.
\end{equation}

By (\ref{Rela_ET_EO}) it is enough to prove that there is a $T>0$ e $C>0$, independent of the solutions of the problem (\ref{Nagh_Dis}), such that

\[
E(T)\leq C\int_{Q}a\mid \xi_t \mid^2dQ,
\]

for in this case, substituting in (\ref{Rela_ET_EO}), we have to

\begin{equation}{\label{decaimento_N}}
E(T)\leq \frac{C}{C+1}E(0)
\end{equation}

We affirm that (\ref{Deca_Exp_Ener}) if follow from (\ref{decaimento_N}). In fact, notice that (\ref{decaimento_N}) is equivalent

\begin{equation}{\label{Axx_D}}
E(T)\leq \gamma E(0)
\end{equation}
where $\gamma=\frac{C}{C+1}<1$. Since the system is invariant by translations in time, we have that (\ref{Axx_D})is valid in $[(m-1)T,mT]$, so
\begin{equation}{\label{Ax_D}}
\begin{aligned}
E(mT) &\leq \gamma E((m-1)T)\\
& \leq \gamma^2 E((m-2)T)\\
& \vdots \\
& \leq \gamma^m E(0)\\
& \leq e^{-\omega mT}E(0)
\end{aligned}
\end{equation}
where $\omega=\frac{1}{T}\ln(\frac{1}{\gamma})>0$. For $t>0$ arbitrary, exists $m=1,2,\dots$, such that $(m-1)T<t\leq mT$. Finally, using that the energy of the system is decreasing, we have:
\begin{eqnarray*}
E(t) &\leq & E((m-1)T) \\
&\leq & e^{-\omega (m-1)T}E(0)\\
&\leq & \frac{1}{\gamma}e^{-\omega mT}E(0)\\
&\leq & \frac{1}{\gamma}e^{-\omega t}E(0)
\end{eqnarray*}
Which proves our statement. So, let try (\ref{decaimento_N})

By the definition of escape fields and regions discussed, we can assume that there are subsets of $\Omega$, $\{\Omega_i\}_{i=1}^{N}$, satisfying (\ref{Camp_Esca_NagEv}). Then the identities (\ref{Desi_Obs_Dis_Cont}) and (\ref{Ident_2NagEv}) can be used on each $\Omega_i$, since vectorial escape fields are defined on each of them.

The idea is to estimate the total energy of the system, first inside $\Omega$ using that on the $\Omega_i$ we have vanishing vector fields defined on them and in the plugin use the property of the dissipation function, $a$.

Now, since (\ref{Desi_Obs_Dis}) is valid only in $\Omega_i$, We are going to restrict ourselves, first, to making estimates on them.

be then $0<\epsilon_2<\epsilon_1<\epsilon_0<\epsilon$ and be
\[V_j=N_{\epsilon_j}\left\{ \cup_{i=1}^{N}\Gamma_{0}^{i}\cup\left( \Omega\setminus\cup_{i=1}^{N}\Omega_i \right) \right\},\quad j=0,1,2.\]
notice that $V_2\subset V_1\subset V_0\subset \bar{V_0}$. be
\[
\phi^i = \left\{\begin{array}{ccc}
1, & \Omega_i\setminus V_i, & i=1,2,\dots,N\\
0, & V_2 
\end{array} \right.
\]
consider now $V^i=\phi^i H^i$, $p^i=\phi^i q$ and $\xi^i=\phi^i\xi$ where $H^i$ is an escape vector field in $\Omega_i$ and $q$ a function defined in $\Omega$, respectively. So, in each $\Omega_i$ it is valid (\ref{Desi_Obs_Dis}) and we have

\begin{equation}{\label{EQ1_DemF}}
\begin{aligned}
2\sigma_1\int_{0}^{T}E^i(t)\leq \int_{\Sigma_i}SB_i d\Sigma_i +\lambda_0\sigma_0\left[ E^i(0)+E^i(T)  \right] - \int_{0}^{T}\int_{\Omega_i}a\left< \xi^i,\xi^i_t \right>dxdt
\end{aligned}
\end{equation}
where,
\begin{equation}{\label{Interface_Naghdi}}
SB_i = \partial\left(  A\xi^i,2m(\xi^i)+\rho\xi^i  \right) + \left[   \mid  \xi^i_{t}  \mid^2 - B(\xi^i,\xi^i)\right]\left< V^i,\nu  \right>
\end{equation}

By the definition of $V_2$, we have to $\Gamma^{i}_{0}\subset V_2$ and what $\phi^i=0$ in $V_2$, we have to $x\in\Gamma^{i}_{0}$, the terms at the border are annulled. So

\begin{equation}{\label{Cond_Gi0}}
SB_i=0\quad\quad \mbox{para}\quad x\in \Gamma^{i}_{0}.
\end{equation}

If $x\in \Omega_i\cap \Gamma$, using boundary conditions, $\xi=0$, replacing in (\ref{Interface_Naghdi}) have

\begin{equation}{\label{FF_1}}
\begin{aligned}
SB_i &=2\partial\left(  A\xi_i,m(\xi^i) \right)-B(\xi^i,\xi^i)\left< V^i,\nu  \right>\\
&=B(\xi^i,\xi^i)\left< V^i,\nu  \right>\\
&\leq 0
\end{aligned}
\end{equation}
where we use the coercivity of $B$. It is concluded that
\begin{equation}{\label{FF_2}}
\int_{\Sigma_i}SB_i\leq 0\quad\mbox{para todo}\quad i
\end{equation}

Using the estimate (\ref{Cond_Gi0}) and (\ref{FF_2}) in  (\ref{EQ1_DemF}) have

\begin{equation}{\label{PP0}}
\begin{aligned}
2\sigma_1\int_{0}^{T}\int_{\Omega_i}\mid \xi_t \mid^2 + 2\sigma_1\int_{0}^{T}\int_{\Omega_i} B(\xi,\xi)\leq -\int_{0}^{T}\left(  a\xi_t,2m(\xi)  \right)+\lambda_0\sigma_0\left[  E(T)+E(0)\right] +L(\xi)\nonumber
\end{aligned}
\end{equation}
where,

\begin{equation}{\label{PP0}}
\begin{aligned}
\int_{0}^{T}\int_{\Omega_i\setminus V_1} B(\xi,\xi) &\leq C_1\int_{T}^{0}\int_{\Omega_i\cap V_1}B(\xi,\xi)+C_{\beta}\int_{0}^{T}\int_{\Omega_i}\mid \xi \mid^2 + \beta\int_{0}^{T}\int_{\Omega_i}B(\xi,\xi)dQ +\\
&\lambda_0\sigma_0\left[  E(T)+E(0)\right] +L(\xi)
\end{aligned}
\end{equation}
where $\beta>0$ is small enough that $\beta \int_{0}^{T}\int_{\Omega_i} B(\xi,\xi)dxdt\leq \int_{0}^{T}\int_{\Omega_i\cap V_1} B(\xi,\xi)dxdt$.

Given that $\Omega\subset \left( \cup\Omega_i \right)\cup V_1$, then

\begin{equation}{\label{PP1}}
\begin{aligned}
\int_{0}^{T}\int_{\Omega\setminus V_1} B(\xi,\xi)&\leq\sum_{i=1}^{N}\int_{0}^{T}\int_{\Omega_i\setminus V_1} B(\xi,\xi)\\ &\leq C_2\int_{0}^{T}\int_{\Omega\cap V_1}B(\xi,\xi)dQ 
& + C_3\int_{0}^{T}a\parallel \xi_t\parallel^2 dt+ \lambda_0\sigma_0\left[  E(T)+E(0)\right] +L(\xi) 
\end{aligned}
\end{equation}

Now let us estimate in the complement of the union of the $\Omega_i$. For this, be $\psi\in C^{\infty}(I\!\!R^{3})$ given by

\begin{equation}{\label{Fun_Bordo}}
\psi(x)=\left\{\begin{aligned}
0 &, \quad x\in I\!\!R^3\setminus V_0 \\
1 &, \quad x\in V_1
\end{aligned}\right.
\end{equation}

Considering $p=\psi$ en (\ref{Ident_2NagEv}) have
\[
\int_{Q_i}B(\xi,\xi)dQ_i = \int_{Q_i}\mid \xi_t\mid^2 dQ_i + \int_{0}^{T}\left( a\xi_t,\psi\xi  \right)dt +L(\xi)
\]
where from,
\begin{equation}{\label{Estima_bordo}}
\begin{aligned}
\int_{0}^{T}\int_{\Omega\cap V_1}B(\xi,\xi)dQ + \int_{0}^{T}\int_{\Omega\cap V_0}B(\xi,\xi)dQ &\leq  \int_{0}^{T}\int_{\Omega\cap V_0}B \parallel \xi_t \parallel^2 dQ + \int_{0}^{T}\int_{\Omega\cap V_0} \left<a\xi_t,\xi\right> +L(\xi)\nonumber
\end{aligned}
\end{equation}
And what $\epsilon_0<\epsilon$, then $w\supset \bar{\Omega}\cap V_0$. Using the function hypothesis $a$ in $w$, have

\begin{equation}{\label{Estima_bordo_1}}
\begin{aligned}
\int_{0}^{T}\int_{\Omega\cap V_1}B(\xi,\xi)dQ &\leq  \int_{0}^{T}\int_{\Omega\cap V_0}\parallel \xi_t \parallel^2 dQ + \int_{0}^{T}\int_{\Omega\cap V_0} \left<a\xi_t,\xi\right> +L(\xi)\nonumber\\
& \leq \frac{1}{a_0}\int_{0}^{T}\int_{\Omega\cap V_0}a \mid\xi\mid^2 dt + L(\xi)+\beta\int_{0}^{T}\parallel\xi_t\parallel^2 dt+L(\xi)
\end{aligned}
\end{equation}
where $\beta$ will be chosen later. so, of (\ref{PP1}), (\ref{Estima_bordo_1}), have

\begin{equation}{\label{Estima_BB}}
\begin{aligned}
\int_{Q}B(\xi,\xi) &= \int_{0}^{T}\int_{\Omega\setminus V_1} B(\xi,\xi)dx dt  + \int_{0}^{T}\int_{\Omega\cap V_1} B(\xi,\xi)dx dt \\
&\leq C_4\int_{0}^{T}\int_{\Omega\cap V_1} B(\xi,\xi)dx dt +C_5\int_{0}^{T}a\parallel\xi_t\parallel^2 dt +L(\xi) \\
&\leq C_6\int_{Q} a\mid \xi_t\mid^2 dt + \beta\int_{0}^{T}\parallel\xi_t\parallel^2 dt + \lambda_0\sigma_0\left[  E(T)+E(0)\right]+ L(\xi)
\end{aligned}
\end{equation}

now considering $p=\frac{1}{2}$ in (\ref{Ident_2NagEv}), have
\begin{equation}{\label{Pmedio}}
\begin{aligned}
\frac{1}{2}\int_{Q} \left[ \mid \xi_t\mid^2-B(\xi,\xi) \right]dQ &= \frac{1}{2}\int_{0}^{T}\left(  a\xi_t,\xi  \right)dt +L(\xi)
&\leq C_8\int_{0}^{T}a\parallel \xi_t  \parallel^2 dt +L(\xi)
\end{aligned}
\end{equation}

grouping (\ref{Estima_BB}) and (\ref{Pmedio}), have
\begin{equation}{\label{Energia_final}}
\begin{aligned}
\int_{0}^{T}E(t)dt &= \int_{Q}B(\xi,\xi)dQ +\frac{1}{2}\int_{Q}\left[  \mid \xi_t \mid^2 - B(\xi, \xi) \right]dQ \\
&\leq C\int_{Q}a\mid \xi_t \mid^2 dQ +\beta \int_{0}^{T} \parallel \xi_t \parallel^2 dt + \lambda_0\sigma_0\left[  E(T)+E(0)\right]+C_8 \int_{Q}a\mid \xi_t\mid dQ +L(\xi)\\
&\leq C_9\int_{Q}a\mid \xi_t\mid dQ + \beta \int_{0}^{T}E(t)dt + C_9\left[E(T)+E(0)\right] + L(\xi)
\end{aligned}
\end{equation}

Taking $\beta=\frac{1}{2}$, considering that the energy is decreasing, this is, $E(t)\geq E(T)$ for $t\in [0,T]$ and $E(T)=E(0)-\int_{Q}a\mid \xi \mid^2 dQ$, have 

\begin{equation}{\label{FFF}}
\begin{aligned}
\frac{1}{2}\int_{0}^{T}E(t)dt &\leq C_9\int_{Q}a\mid \xi_t\mid dQ + C_9\left[E(T)+E(0)\right] + L(\xi)\\
\frac{T}{2} E(T) & \leq C_9\int_{Q}a\mid \xi_t\mid dQ + \lambda_0\sigma_0\left[  E(T)+E(T)+\int_{Q}a\mid \xi_t\mid dQ \right] + L(\xi)\\
\left( \frac{T}{2}-2\lambda_0\sigma_0  \right) & \leq C_{10}\int_{Q}a\mid \xi_t\mid dQ +L(\xi)
\end{aligned}
\end{equation}
For $T>4\lambda_0\sigma_0$ of (\ref{FFF}) and the uniqueness-compactness argument \cite{Yao}, have
\[
E(t)\leq C\int_{Q}a\mid \xi_t\mid dQ
\]
This proves the theorem.
\end{proof}

\section{Controllability via Stability}

Russell Principle\cite{Russell_2} provides a method for testing exact controllability using the uniform stabilization result.

In this section we are going to study the exact controllability for the Naghdi evolution system. For this, we apply Russell Principle using the result of energy decay proved in the theorem \ref{Esta_Nagh}

The exact controllability problem, with controls inside, consists of finding a vector function $F=F(x,t)$, control call, such that for some $T>0$ the following problem has a solution:
\begin{equation}{\label{EqCont_1}}
\left\{
\begin{aligned}
& \eta_{tt}+A\eta=F(x,t)\quad\mbox{em}\quad \Omega\times(0,T)\\
& \eta =0\quad\mbox{em}\quad\partial\Omega\times(0,T) \\
& \eta(0)=\eta_{0},\quad\eta_{t}(0)=\eta_1\\
& \eta(T)=\tilde{\eta}_0,\quad\eta_{t}(T)=\tilde{\eta}_1
\end{aligned}\right.
\end{equation}
for any initial data $(\eta_0,\eta_1)$ and endings $(\tilde{\eta}_0,\tilde{\eta}_1)$ in the appropriate functional spaces and $F$ acting in a subregion of $\Omega$.

We will prove that the function $F$ needs to act only in a sub-region of $\Omega$ arbitrarily small, precisely in the escape region for the Naghdi shell.

We consider $T>0$ such that
\begin{equation}{\label{Menor_1}}
c_1 e^{-c_2 T}<1
\end{equation}
where $c_1>0$ and $c_2>0$ are the constants that appear in the theorem \ref{Esta_Nagh}.

In this section we will prove the following result:
\begin{theorem}{\label{Control_exata_Naghi}}
Let $\Omega$ the median surface of the Naghdi shell and $w\subset\Omega$ the vanishing region given in the proof of the theorem \ref{Esta_Nagh}. Then, if $T>0$ satisfies (\ref{Menor_1}) the system (\ref{EqCont_1}) is exactly controllable at time $T$ with controls located at $w$
\end{theorem}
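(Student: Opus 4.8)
The idea is to derive exact controllability from the exponential decay of Theorem~\ref{Esta_Nagh} by Russell's forward--backward argument, which needs no unique continuation beyond what is already used there. Equip the finite-energy space $\mathcal H$ of Cauchy data with the energy norm
\[
\|(\xi_0,\xi_1)\|_{\mathcal H}^2=\|\xi_1\|_{L^2(\Omega)}^2+\widetilde J(\xi_0,\xi_0),
\]
so that a solution of the damped equation $\xi_{tt}+A\xi+a(x)\xi_t=0$ with $\xi|_{\partial\Omega}=0$ satisfies $\|(\xi(t),\xi_t(t))\|_{\mathcal H}^2=2E(t)$. Well-posedness in $\mathcal H$ was recalled via semigroup theory; let $\mathcal F_T:\mathcal H\to\mathcal H$ send $(\xi_0,\xi_1)$ to $(\xi(T),\xi_t(T))$. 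Theorem~\ref{Esta_Nagh} then gives $\|\mathcal F_T\|_{\mathcal L(\mathcal H)}^2\le c_1 e^{-c_2T}$, which is $<1$ exactly by (\ref{Menor_1}). First I would reduce to \emph{null} controllability: since (\ref{EqCont_1}) is linear and the conservative flow $\eta_{tt}+A\eta=0$, $\eta|_{\partial\Omega}=0$, is time-reversible on $\mathcal H$, it suffices to steer an arbitrary state $(\eta_0,\eta_1)\in\mathcal H$ to $(0,0)$ at time $T$ with a control in $L^2(w\times(0,T))$; for a general target $(\tilde\eta_0,\tilde\eta_1)$ one takes the free solution $\zeta$ with $\zeta(T)=\tilde\eta_0,\ \zeta_t(T)=\tilde\eta_1$, applies the null control to the data $(\eta_0-\zeta(0),\eta_1-\zeta_t(0))$, and adds $\zeta$ back, the control being unchanged and still supported in $w$.

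For the null control I would split $\eta=\eta^++\eta^-$, with $\eta^+$ solving the forward-damped equation $\eta^+_{tt}+A\eta^++a(x)\eta^+_t=0$ and $\eta^-$ the backward-damped equation $\eta^-_{tt}+A\eta^--a(x)\eta^-_t=0$ on $(0,T)$, both with homogeneous Dirichlet data. Then
\[
\eta_{tt}+A\eta=-a(x)\bigl(\eta^+_t-\eta^-_t\bigr)=:F,
\]
supported in $\operatorname{supp}a\subset w$, and $F\in L^2(w\times(0,T))$ because $\eta^\pm_t\in C([0,T];L^2(\Omega))$ for finite-energy solutions. Taking $X:=(\eta^+(0),\eta^+_t(0))$ as the free parameter, $(\eta^+(T),\eta^+_t(T))=\mathcal F_T X$; the terminal condition $\eta(T)=\eta_t(T)=0$ forces $(\eta^-(T),\eta^-_t(T))=-\mathcal F_T X$, and running the backward-damped flow from $T$ to $0$ yields $(\eta^-(0),\eta^-_t(0))=-\mathcal B_T\mathcal F_T X$, where $\mathcal B_T$ maps terminal to initial data of the backward-damped equation. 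The condition $\eta(0)=\eta_0,\ \eta_t(0)=\eta_1$ becomes
\[
(I-\mathcal B_T\mathcal F_T)\,X=(\eta_0,\eta_1).
\]

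The change of variable $t\mapsto T-t$ turns the backward-damped equation into the forward-damped one with velocity reversed, so $\mathcal B_T=R\mathcal F_T R$ with $R(\xi_0,\xi_1)=(\xi_0,-\xi_1)$; as $R$ is an $\mathcal H$-isometry (the energy is invariant under $\xi_t\mapsto-\xi_t$), $\|\mathcal B_T\|_{\mathcal L(\mathcal H)}=\|\mathcal F_T\|_{\mathcal L(\mathcal H)}$, whence $\|\mathcal B_T\mathcal F_T\|_{\mathcal L(\mathcal H)}\le c_1 e^{-c_2T}<1$ by (\ref{Menor_1}). Therefore $I-\mathcal B_T\mathcal F_T$ is boundedly invertible by a Neumann series, $X$ is determined, the associated $F$ is the desired null control, and combining this with the reduction above shows that (\ref{EqCont_1}) is exactly controllable at time $T$ with controls supported in $w$.

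\textbf{Main obstacle.} The delicate point is the identity $\mathcal B_T=R\mathcal F_T R$ together with the isometry of $R$ on $\mathcal H$: one must check that $A$ and $a(\cdot)$ are genuinely time-independent, so that the time reversal is exact, and that the energy form $\tfrac12\|\xi_t\|_{L^2(\Omega)}^2+\tfrac12\widetilde J(\xi,\xi)$, with $\widetilde J$ coercive by (\ref{Eq37}), is unchanged under $\xi_t\mapsto-\xi_t$; this is what makes $\|\mathcal F_T\|_{\mathcal L(\mathcal H)}$ controlled \emph{exactly} by the decay rate of Theorem~\ref{Esta_Nagh}, so that (\ref{Menor_1}) is precisely the smallness condition needed for the Neumann series. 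The remaining ingredients — well-posedness of the backward-damped problem, immediate from that of (\ref{Nagh_Dis}) by time reversal, and the regularity $F\in L^2(w\times(0,T))$ — are routine.
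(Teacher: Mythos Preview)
Your proposal is correct and follows essentially the same Russell forward--backward construction as the paper: solve a forward-damped problem and a time-reversed forward-damped problem, superpose them to obtain a controlled trajectory with control $-a(x)(\eta^+_t-\eta^-_t)$ supported in $w$, and invert the resulting operator $I-\mathcal B_T\mathcal F_T$ (the paper's $I-K$) by a Neumann series after applying the decay estimate of Theorem~\ref{Esta_Nagh} twice. Your write-up is in fact tidier than the paper's, making the identity $\mathcal B_T=R\mathcal F_T R$ and the isometry of $R$ explicit, which is exactly what justifies $\|K\|<1$ under (\ref{Menor_1}).
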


\begin{proof}
Since the system is linear and reversible in time, it is enough to consider the controllability at zero, that is, $(\tilde{\eta}_0, \tilde{\eta}_1)=(0,0)$ 

We have to, for $(\eta_0,\eta_1)\in V\times H$, there is only one solution $\eta\in C\left( [0,\infty); V\right)\times C^1\left( [0,\infty); H\right)$ of the problem.
\begin{equation}{\label{EqCont_2}}
\left\{
\begin{aligned}
& \eta_{tt}+A\eta+a(x)\eta_t=0\quad\mbox{em}\quad \Omega\times(0,T)\\
& \eta =0\quad\mbox{em}\quad\partial\Omega\times(0,T) \\
& \eta(0)=\eta_{0},\quad\eta_{t}(0)=\eta_1\\
\end{aligned}\right.
\end{equation}

Additional, for the initial data $(-\eta(T),\eta_t(T))$ there is only one solution to the problem.
\begin{equation}{\label{EqCont_3}}
\left\{
\begin{aligned}
& \theta_{tt}+A\theta+a(x)\theta_t=0\quad\mbox{em}\quad \Omega\times(0,T)\\
& \theta =0\quad\mbox{em}\quad\partial\Omega\times(0,T) \\
& \theta(0)=-\eta(T),\quad \theta_t(T)=\eta_{t}(T)\\
\end{aligned}\right.
\end{equation}
where $a$ acts in the escape region given in the theorem \ref{Esta_Nagh}.

let define $\xi(x,t)=\eta(x,t)+\theta(x,T-t)$. The field $\xi$ satisfies
\begin{equation}{\label{EqCont_4}}
\left\{
\begin{aligned}
& \xi_{tt}+A\xi=a(x)(\eta_t+\theta_t)\quad\mbox{em}\quad \Omega\times(0,T)\\
& \xi =0\quad\mbox{em}\quad\partial\Omega\times(0,T) \\
& \xi(0)=\eta_{0}+\theta(T),\quad\xi_{t}(0)=\eta_1-\theta_t(T)\\
& \xi(T)=0,\quad\xi_t(T)=0
\end{aligned}\right.
\end{equation}

Of (\ref{EqCont_4}) we have that the initial data that are brought to equilibrium have the form
\begin{equation}{\label{Dados_Equilibro}}
\left(\xi_0,\xi_1\right)=\left( \eta_0+\theta(T),\eta_1 -\theta_t(T) \right)
\end{equation} 
Thus it is enough to prove that for each initial data $\left(\xi_0,\xi_1\right)\in V\times H$, exists $\left(\eta_0,\eta_1\right)$ satisfying (\ref{Dados_Equilibro}). Equivalently, show that the application:
\begin{equation}{\label{Apli_Inv}}
\begin{aligned}
 L:& V\times H \longrightarrow V\times H \\
&\left(\eta_0,\eta_1\right)\longrightarrow \left( \eta_0 +\theta(T),\eta_1 - \theta_t(T) \right)
\end{aligned}
\end{equation}
is surjective. As $L=I-K$ where $K$ is the application given by
\[
K(\eta_0,\eta_1)=\left( -\theta(T),\theta_t(T) \right)
\]
it is enough to show that $\parallel K \parallel_{V\times H}<1$. Applying the theorem twice \ref{Esta_Nagh} have
\begin{equation}
\begin{aligned}
\parallel K(\eta_0,\eta_1) \parallel_{V\times H} &= \parallel  \left( -\theta(T),\theta_t(T) \right) \parallel_{V\times H}\\
&\leq c_1 e^{-c_2T}\parallel \left( -\eta(T),\eta_t(T) \right) \parallel_{V\times H}\\
&\leq c_3e^{-c_4T}\parallel \left( \eta(0),\eta_1 \right) \parallel_{V\times H}.
\end{aligned}
\end{equation}
choosing $T>0$ such that $c_3e^{-c_4T}<1$ we have to $\parallel K \parallel_{V\times H}<1$. Thus $L=I-K$ is a surjective application and therefore the theorem \ref{Control_exata_Naghi} it is demonstrated.
\end{proof}

\section{conclusions}
	\begin{itemize}
		\item[(a)] The dissipation can be considered in an arbitrarily small region of the shell and obtain stabilization and controllability.
		\item[(b)] The existence of flight regions are verifiable for Naghdi shells, this implies that localized dissipation in the complement of the union of said regions generates stabilization and controllability.
	\end{itemize}

\bibliographystyle{amsalpha}

\end{document}